\documentclass[11pt]{article}
\usepackage[margin=1in]{geometry}
\usepackage[usenames,dvipsnames,svgnames,table]{xcolor} 

\usepackage{longtable}
\usepackage{hhline}
\usepackage{float}
\usepackage{booktabs}

\usepackage{setspace}
\usepackage{xspace}
\usepackage{enumitem} 
\usepackage{rotfloat} 
\usepackage{graphicx}
\usepackage{amssymb, amsmath, amsthm}
\usepackage{verbatim}
\usepackage{natbib}
\usepackage{authblk}
\usepackage{kotex}
\usepackage{multirow}
\usepackage{subcaption} 
\usepackage{array}
\usepackage{hyperref}
\newcolumntype{L}{>{\centering\arraybackslash}m{0.1\linewidth}}

\newtheorem{theorem}{Theorem}
\newtheorem{lemma}[theorem]{Lemma}
\newtheorem{proposition}[theorem]{Proposition}
\newtheorem{corollary}[theorem]{Corollary}

\newtheorem{assumption}[theorem]{Assumption}

\newcommand{\Frechet}{Fr\'{e}chet\xspace}

\DeclareMathOperator*{\argmin}{arg\,min}

\DeclareMathOperator{\Var}{Var}
\DeclareMathOperator{\Cov}{Cov}
\DeclareMathOperator{\inj}{inj}
\DeclareMathOperator{\secop}{sec}

\DeclareMathOperator{\Cut}{Cut}

\newcommand{\M}{\mathcal M}
\newcommand{\Pbb}{\mathbb P}

\newcommand{\R}{\mathbb R}

\newcommand{\eps}{\varepsilon}
\newcommand{\1}{\mathbf{1}}

\begin{document}
	
\title{Scale selection for geometric medians on product manifolds}
\author[1,2]{Kisung You}
\affil[1]{Department of Mathematics, Baruch College}
\affil[2]{Department of Mathematics, The Graduate Center, City University of New York}
\date{}

\maketitle
\begin{abstract}
Geometric medians on product manifolds are sensitive to the relative scaling of factor metrics because the median objective couples the factors rather than separating them.  We study this scale-selection problem and first prove that naive joint minimization over location and scale is degenerate: the scale is driven to the boundary and the problem collapses to a marginal median, effectively discarding one factor.  Thus relative scale is not identifiable from the raw median loss alone. We develop three alternatives to mitigate this issue. The first treats scale as indexing a sensitivity path and establishes uniform consistency, a functional central limit theorem, and a derivative-based sensitivity measure.  The second constructs a robust scale-calibrated median using marginal radial median scales, yielding unit invariance, consistency, a two-step central limit theorem, and bounded influence.  The third introduces a bounded balance equation for direct scale estimation, with monotonicity, uniqueness, joint asymptotic normality, and bounded influence.  Simulations illustrate boundary collapse, sensitivity, unit invariance, and balanced estimation in Euclidean and Bures--Wasserstein settings.
\end{abstract}


\section{Introduction}

Robust location estimation on nonlinear spaces has become a central problem in
modern statistics.  Robustness has long been a foundational theme in statistical
inference \citep{huber_1981_RobustStatistics, hampel_2005_RobustStatisticsApproach}, while manifold-valued and object-valued data
analysis has developed into a broad area of modern multivariate statistics \citep{bhattacharya_2012_NonparametricInferenceManifolds, patrangenaru_2016_NonparametricStatisticsManifolds, pennec_2020_RiemannianGeometricStatistics}.  Notable examples include
Riemannian manifolds, quotient spaces, and metric spaces that appear in
directional data analysis, shape analysis, covariance matrix analysis, optimal
transport, robotics, medical imaging, and multimodal data integration.  The
\Frechet mean is the most widely used notion of center on such spaces \citep{frechet_1948_ElementsAleatoiresNature, bhattacharya_2003_LargeSampleTheory}, but it inherits the
well-known nonrobustness of Euclidean means.  The geometric median, defined as
the minimizer of expected distance rather than expected squared distance,
provides a natural robust alternative \citep{vardi_2000_MultivariateL1medianAssociated, minsker_2015_GeometricMedianRobust, fletcher_2009_GeometricMedianRiemannian, you_2025_WassersteinMedianProbability}.

This paper concerns geometric medians on product manifolds.  Let $(M,g_M)$ and $(N,g_N)$ be complete connected Riemannian manifolds with distances $d_M$ and $d_N$.  For paired observations
$
 Z=(X,Y)\in \M:=M\times N,
$
the canonical product distance is
\[
 d_1((p,q),(x,y))=\bigl\{d_M(p,x)^2+d_N(q,y)^2\bigr\}^{1/2}.
\]
The product geometric median is a minimizer of
\[
 m\mapsto E[d_1(m,Z)],\qquad m=(p,q)\in M\times N.
\]
A key feature, emphasized in the fixed-metric theory of \citet{park_2026_GeometricMediansProduct}, is that this objective is not separable across factors.  Indeed,
\[
 E[d_1((p,q),(X,Y))]
 =E\left[\{d_M(p,X)^2+d_N(q,Y)^2\}^{1/2}\right]
\]
contains a common nonlinear norm that couples the two components. This differs sharply from the Fr\'echet mean objective,
\[
 E[d_1((p,q),(X,Y))^2]
 =E[d_M(p,X)^2]+E[d_N(q,Y)^2],
\]
which separates additively.

The nonseparability of the product median has an important consequence that the relative scaling of the factor metrics changes the median target.  To make this explicit, we introduce the scaled product metric
\begin{equation}\label{eq:scaled-distance-intro}
 d_\alpha((p,q),(x,y))
 =\left\{\alpha d_M(p,x)^2+(2-\alpha)d_N(q,y)^2\right\}^{1/2},
 \qquad \alpha\in(0,2).
\end{equation}
The normalization is chosen so that $\alpha=1$ corresponds to the canonical product metric.  Relative to the canonical metric, $\alpha>1$ increases the weight of the
$M$-factor, while $\alpha<1$ increases the weight of the $N$-factor. For each fixed $\alpha$, define the population and empirical median objectives
\[
 F_\alpha(m)=E[d_\alpha(m,Z)],
 \qquad
 F_{n,\alpha}(m)=\frac1n\sum_{i=1}^n d_\alpha(m,Z_i),
\]
and the corresponding medians
\[
 m_\alpha=\argmin_{m\in\M}F_\alpha(m),
 \qquad
 \widehat m_{n,\alpha}=\argmin_{m\in\M}F_{n,\alpha}(m).
\]

The motivating question is as follows: can the relative scale parameter $\alpha$ be learned jointly with the product geometric median by minimizing the same median objective over both $m$ and $\alpha$? 

Our answer is negative. For every fixed location $m$, the map
$
 \alpha\mapsto F_{n,\alpha}(m)
$
is concave.  Hence an interior critical point in $\alpha$ is generically a maximum, not a minimum.  Minimization over $\alpha$ is likely to drive the solution to the boundary.  At the population level, minimizing over $\alpha\in(0,2)$  collapses to the factor with the smaller marginal objective. Thus $\alpha$ is not identifiable from the raw median loss alone.

This negative result is the starting point of the paper.  It clarifies that scale choice for product medians is not a harmless tuning detail.  It is a target-defining decision and any meaningful selection or estimation of $\alpha$ requires an additional principle beyond minimizing $F_\alpha(m)$ jointly in $(m,\alpha)$. Our main contributions are as follows.

\begin{enumerate}
\item \textbf{Failure of naive joint learning.}
We prove that minimizing the raw median objective jointly over location and
scale is degenerate: the scale parameter is driven to the boundary, so the
procedure effectively favors one factor and discards the other.

\item \textbf{Sensitivity of the median path.}
We treat the scale parameter as indexing a family of population medians and
study the resulting median path.  We prove uniform consistency, a functional
central limit theorem, and a derivative formula that quantifies scale
sensitivity.

\item \textbf{Robust scale calibration.}
We propose a default scale choice based on robust marginal radial scales.  The
resulting median is equivariant to changes of units in the factor metrics, and
we establish consistency, a two-step central limit theorem, and bounded
influence.

\item \textbf{Balanced scale estimation.}
We introduce a bounded estimating equation that selects the scale by balancing
the relative factor contributions.  This yields a direct data-adaptive scale
estimator, for which we prove uniqueness, consistency, joint asymptotic
normality, and bounded influence.
\end{enumerate}

The rest of the paper is organized as follows.  Section~\ref{sec:setup}
introduces the scaled product metric, notation, and standing assumptions.
Section~\ref{sec:nogo} proves the no-go theorem for naive joint scale learning.
Section~\ref{sec:path} develops the sensitivity-path theory.  Section~\ref{sec:scale-calibration}
studies robust scale calibration, and Section~\ref{sec:balanced} introduces the
balanced estimating equation.  Section~\ref{sec:experiments} presents numerical
experiments.  Section~\ref{sec:conclusion} concludes with a discussion of the
main implications and future directions. Proofs of the formal results are collected in the appendix.

\section{Setup and scaled product geometry}\label{sec:setup}

\subsection{Scaled product metrics}

Let $(M,g_M)$ and $(N,g_N)$ be smooth, connected, complete Riemannian manifolds.  Let $d_M$ and $d_N$ denote their geodesic distances.  Denote by 
$
 \M=M\times N
$
the product manifold.

For $m=(p,q)\in\M$ and $z=(x,y)\in\M$, we write the factor distances as
$d_M(p,x)$ and $d_N(q,y)$. For $\alpha\in(0,2)$, define the scaled product metric tensor
\begin{equation}\label{eq:g-alpha}
 g_\alpha=\alpha g_M\oplus(2-\alpha)g_N.
\end{equation}
The associated geodesic distance is
\begin{equation}\label{eq:d-alpha}
 d_\alpha((p,q),(x,y))
 =\left\{\alpha d_M(p,x)^2+(2-\alpha)d_N(q,y)^2\right\}^{1/2}.
\end{equation}

Scaling a Riemannian metric by a positive constant leaves the Levi--Civita
connection unchanged.  Hence the factorwise geodesics and logarithmic maps are
the same as for the unscaled metrics, while their lengths are rescaled \citep{you_2026_ConstantMetricScaling}.  Thus
\[
 \log_{(p,q)}(x,y)=(\log_p x,\log_q y)
\]
under all metrics $g_\alpha$, although its norm depends on $\alpha$:
\[
 \|\log_{(p,q)}(x,y)\|_{g_\alpha}^2
 =\alpha\|\log_p x\|_{g_M}^2+(2-\alpha)\|\log_q y\|_{g_N}^2.
\]
Throughout we use the compact scale interval
\begin{equation}\label{eq:Ieps}
 I_\eps=[\eps,2-\eps],
 \qquad 0<\eps<1,
\end{equation}
for uniform results.  This excludes geometric degeneration as one factor weight tends to zero.

\subsection{Curvature and injectivity under scaling}

The following lemma records basic quantities such as sectional curvature and injectivity radius for the scaled metric.

\begin{lemma}[Scaled product geometry]\label{lem:scaled-geometry}
Assume $\secop_M\le \kappa_M$ and $\secop_N\le \kappa_N$.  Under the metric $g_\alpha=\alpha g_M\oplus(2-\alpha)g_N$, the sectional curvature of the product satisfies
\[
 \secop_{g_\alpha}\le
 \kappa_\alpha
 :=
 \max\left\{
 \frac{\kappa_M}{\alpha},
 \frac{\kappa_N}{2-\alpha}
 \right\}.
\]
Moreover,
\[
 \inj_{\alpha g_M}(p)=\sqrt\alpha\,\inj_{g_M}(p),
 \qquad
 \inj_{(2-\alpha)g_N}(q)=\sqrt{2-\alpha}\,\inj_{g_N}(q).
\]
Consequently, on $I_\eps$,
\[
 \kappa_\alpha\le
 \kappa_\eps:=\max\left\{\frac{\kappa_M}{\eps},\frac{\kappa_N}{\eps}\right\},
\]
and the scaled injectivity radii are bounded below by $\sqrt\eps$ times their unscaled counterparts.
\end{lemma}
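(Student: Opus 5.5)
The proof uses two standard facts. The first is how curvature and injectivity radius behave under a constant rescaling $\tilde g=cg$, $c>0$. The second is how sectional curvature behaves on a Riemannian product.

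\emph{Step 1 (constant rescaling).} The Koszul formula shows that the Levi--Civita connection of $cg$ equals that of $g$, as already noted above. Hence the $(1,3)$ curvature tensor $R$ is unchanged. For linearly independent $u,v$, the sectional curvature becomes
\[
\secop_{cg}(u,v)=\frac{cg(R(u,v)v,u)}{c^2\{g(u,u)g(v,v)-g(u,v)^2\}}=\frac{\secop_g(u,v)}{c}.
\]
Thus $\secop_{\alpha g_M}\le \kappa_M/\alpha$ and $\secop_{(2-\alpha)g_N}\le\kappa_N/(2-\alpha)$. The signs are harmless, since dividing an upper bound by a positive constant preserves it. For the injectivity radius, geodesics of $cg$ are reparametrizations of those of $g$ by the factor $\sqrt c$, and $d_{cg}=\sqrt c\,d_g$. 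Hence $\exp^{cg}_p(w)=\exp^g_p(w)$ as maps on $T_pM$, while $\|w\|_{cg}=\sqrt c\|w\|_g$. A $g$-ball of radius $r$ in $T_pM$ is therefore a $cg$-ball of radius $\sqrt c\,r$. The largest radius on which $\exp_p$ is a diffeomorphism scales accordingly, which gives $\inj_{cg}(p)=\sqrt c\,\inj_g(p)$.

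\emph{Step 2 (product curvature).} This is the main point requiring care. On $(M\times N,g_1\oplus g_2)$ the curvature tensor splits: $R=R_1\oplus R_2$, with all mixed terms vanishing. Take a plane spanned by orthonormal vectors $u=(u_1,u_2)$ and $v=(v_1,v_2)$. Then
\[
K(u,v)=\langle R_1(u_1,v_1)v_1,u_1\rangle+\langle R_2(u_2,v_2)v_2,u_2\rangle.
\]
Each term is bounded by the corresponding curvature bound times the quantity $A_i=|u_i|^2|v_i|^2-\langle u_i,v_i\rangle^2\ge0$. This holds even when $u_i,v_i$ are dependent, in which case both sides vanish.

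It remains to show $A_1+A_2\le 1$. Here I would use the Cauchy--Schwarz-type inequality
\[
A_1+A_2\le |u|^2|v|^2-\langle u,v\rangle^2=1.
\]
To verify it, expand. The difference equals
\[
|u_1|^2|v_2|^2+|u_2|^2|v_1|^2-2\langle u_1,v_1\rangle\langle u_2,v_2\rangle,
\]
which is nonnegative because
\[
|\langle u_1,v_1\rangle\langle u_2,v_2\rangle|\le |u_1||v_1||u_2||v_2|\le\tfrac12\bigl(|u_1|^2|v_2|^2+|u_2|^2|v_1|^2\bigr).
\]
Setting $\kappa=\max(\kappa_1,\kappa_2)$, we get:
\begin{itemize}
\item if $\kappa\ge0$, then $K\le\kappa(A_1+A_2)\le\kappa$;
\item if $\kappa<0$, the bound $K\le\kappa(A_1+A_2)$ only gives $K\le0$, because mixed planes are flat.
\end{itemize}
So for negative bounds the stated $\kappa_\alpha$ should be read as $\max\{\kappa_\alpha,0\}$, which is the form used in CAT$(\kappa)$ arguments. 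I would state this caveat explicitly. Applying this with $\kappa_1=\kappa_M/\alpha$ and $\kappa_2=\kappa_N/(2-\alpha)$ from Step 1 yields the bound $\kappa_\alpha$.

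\emph{Step 3 (uniformity on $I_\eps$).} For $\alpha\in I_\eps$ we have $\alpha\ge\eps$ and $2-\alpha\ge\eps$. For nonnegative $\kappa_M,\kappa_N$ this gives $\kappa_M/\alpha\le\kappa_M/\eps$ and $\kappa_N/(2-\alpha)\le\kappa_N/\eps$, hence $\kappa_\alpha\le\kappa_\eps$. For a negative bound, replacing it by $0$ as above keeps the inequality valid. Finally, $\sqrt\alpha\ge\sqrt\eps$ and $\sqrt{2-\alpha}\ge\sqrt\eps$, so the scaled injectivity radii from Step 1 are at least $\sqrt\eps$ times their unscaled counterparts.
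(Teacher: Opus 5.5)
Your proof follows the paper's route: rescaling $g\mapsto cg$ leaves the Levi--Civita connection unchanged, divides sectional curvature by $c$ and multiplies injectivity radii by $\sqrt c$, and the product bound comes from flatness of mixed planes; your inequality $A_1+A_2\le 1$ makes rigorous what the paper only asserts as ``arbitrary two-planes decompose as weighted combinations of factorwise planes.'' Your sign caveat is correct and the paper misses it: when $\kappa_M,\kappa_N<0$, mixed planes have curvature $0>\kappa_\alpha$, and also $\kappa_\alpha>\kappa_\eps$ (e.g.\ $\kappa_M=\kappa_N=-1$, $\alpha=1$), so the lemma holds only after replacing $\kappa_M,\kappa_N$ by their positive parts, exactly as you propose.
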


\subsection{Population and empirical medians}

For completeness, we revisit how geometric medians on product manifolds are defined. 
Let $Z=(X,Y)$ be a random element of $\M$ with distribution $P$, and let $Z_1,\ldots,Z_n$ be i.i.d. copies of $Z$.  For $m=(p,q)$,  define
\begin{equation}\label{eq:Falpha}
 F_\alpha(m)=E[d_\alpha(m,Z)],
 \qquad
 F_{n,\alpha}(m)=P_n d_\alpha(m,\cdot)=\frac1n\sum_{i=1}^n d_\alpha(m,Z_i).
\end{equation}
A population scaled product median is any minimizer
\[
 m_\alpha\in\argmin_{m\in\M}F_\alpha(m),
\]
and an empirical scaled product median is any minimizer
\[
 \widehat m_{n,\alpha}\in\argmin_{m\in\M}F_{n,\alpha}(m).
\]
When uniqueness holds, the set-valued notation is suppressed.

For $m=(p,q)$, write
\[
 u_m(Z)=\log_pX\in T_pM,
 \qquad
 v_m(Z)=\log_qY\in T_qN,
\]
and
\begin{equation}\label{eq:r-alpha}
 r_\alpha(m,Z)=d_\alpha(m,Z)
 =\left\{\alpha\|u_m(Z)\|_M^2+(2-\alpha)\|v_m(Z)\|_N^2\right\}^{1/2}.
\end{equation}
On the set where $Z\notin\Cut(m)\cup\{m\}$, define the median score
\begin{equation}\label{eq:score}
 \psi_\alpha(Z;m)
 =\frac{(u_m(Z),v_m(Z))}{r_\alpha(m,Z)}
 \in T_m\M.
\end{equation}
The population score is $ G_\alpha(m)=E[\psi_\alpha(Z;m)]$ and the first-order  condition for an interior nonsingular population median is $G_\alpha(m_\alpha)=0.$
The Riemannian gradient of $m\mapsto d_\alpha(m,Z)$ with respect to the scaled metric $g_\alpha$ is $-\psi_\alpha(Z;m)$.  With respect to the unscaled product metric $g_1$, the gradient is
\[
 -\frac{(\alpha u_m(Z),(2-\alpha)v_m(Z))}{r_\alpha(m,Z)}.
\]
The zero equations are equivalent because $\alpha$ and $2-\alpha$ are positive.  We use \eqref{eq:score} because it gives the cleanest common notation over $\alpha$.

\subsection{Standing assumptions}

The following assumptions are intentionally explicit.  They are local versions of standard conditions for nonsmooth Riemannian $M$-estimation and hold, for example, globally on Hadamard products under non-collinearity and locally on sufficiently small strongly convex balls in bounded positive curvature. 

\begin{assumption}[Uniform local geometry]\label{ass:geometry}
There exists a compact geodesically strongly convex set $K\subset\M$ such that, for every $\alpha\in I_\eps$, the population median $m_\alpha$ belongs to the interior of $K$.  On an open neighborhood $K^+$ of $K$, minimizing geodesics are unique, logarithmic maps are single-valued and smooth away from the diagonal, and all relevant curvature and injectivity constants are bounded uniformly over $\alpha\in I_\eps$.
\end{assumption}

\begin{assumption}[Unique medians and uniform separation]\label{ass:separation}
For each $\alpha\in I_\eps$, $F_\alpha$ has a unique minimizer $m_\alpha$ in $K$.  Moreover, for every $\eta>0$,
\begin{equation}\label{eq:uniform-separation}
 \inf_{\alpha\in I_\eps}\ 
 \inf_{m\in K:\ d_1(m,m_\alpha)\ge\eta}
 \{F_\alpha(m)-F_\alpha(m_\alpha)\}>0.
\end{equation}
\end{assumption}

\begin{assumption}[No singular mass]\label{ass:no-singular}
For every $\alpha\in I_\eps$,
\[
 P(Z=m_\alpha)=0,
\]
and for every $m$ in a neighborhood of $\{m_\alpha:\alpha\in I_\eps\}$,
\[
 P(Z\in\Cut(m))=0.
\]
\end{assumption}

\begin{assumption}[Differentiability and nonsingularity]\label{ass:differentiability}
The map $(\alpha,m)\mapsto G_\alpha(m)$ is continuously differentiable in a neighborhood of
$
 \{(\alpha,m_\alpha):\alpha\in I_\eps\}.
$
For each $\alpha$, let
\[
 A_\alpha=D_mG_\alpha(m_\alpha):T_{m_\alpha}\M\to T_{m_\alpha}\M.
\]
Then $A_\alpha$ is invertible and
\[
 \inf_{\alpha\in I_\eps}\sigma_{\min}(A_\alpha)>0
\]
when represented in any smooth orthonormal frame over the compact set of medians.
\end{assumption}

\begin{assumption}[Uniform empirical-process regularity]\label{ass:empirical}
The classes
\[
 \mathcal F_K=\bigl\{z\mapsto d_\alpha(m,z):\alpha\in I_\eps,\ m\in K\bigr\}
\]
and, in local charts and after parallel transport to a common tangent space,
\[
 \mathcal G_K=\bigl\{z\mapsto \psi_\alpha(z;m):\alpha\in I_\eps,\ m\in K\bigr\}
\]
are respectively $P$-Glivenko--Cantelli and $P$-Donsker.  In addition, the stochastic equicontinuity implied by the Donsker condition holds uniformly in $\alpha$ and $m$ in neighborhoods of $m_\alpha$.
\end{assumption}

These assumptions are standard in empirical-process treatments of
$M$- and $Z$-estimators \citep{vandervaart_1998_AsymptoticStatistics, kosorok_2008_IntroductionEmpiricalProcesses}. We note that Assumption \ref{ass:empirical} can be replaced by primitive smoothness and entropy conditions.  On compact normal-coordinate neighborhoods, the functions above are Lipschitz-parametric away from a $P$-null singular set, with bounded envelopes on $I_\eps$.  Standard results for Lipschitz-parametric empirical-process classes then imply the stated Glivenko--Cantelli and Donsker properties.

\section{Why naive joint learning of $\alpha$ fails}\label{sec:nogo}

This section proves that $\alpha$ cannot be learned by minimizing the raw median
objective jointly over $(m,\alpha)$.  The obstruction comes from a structural
feature of the geometric median loss. Then the $\alpha$-scaled distance can be written as
\[
 d_\alpha(m,z)=\{\alpha A_m(z)+(2-\alpha)B_m(z)\}^{1/2}.
\]

\begin{lemma}[Concavity in the scale parameter]\label{lem:concavity}
For every fixed $m\in\M$ and $z\in\M$, the map
\[
 \alpha\mapsto d_\alpha(m,z)
\]
is concave on $(0,2)$.  Wherever $d_\alpha(m,z)>0$,
\[
 \frac{\partial}{\partial\alpha}d_\alpha(m,z)
 =\frac{A_m(z)-B_m(z)}{2d_\alpha(m,z)},
\]
and
\[
 \frac{\partial^2}{\partial\alpha^2}d_\alpha(m,z)
 =-\frac{\{A_m(z)-B_m(z)\}^2}{4d_\alpha(m,z)^3}\le0.
\]
Consequently, $\alpha\mapsto F_{n,\alpha}(m)$ and $\alpha\mapsto F_\alpha(m)$ are concave wherever the derivatives are well-defined; the concavity statement holds globally by continuity.
\end{lemma}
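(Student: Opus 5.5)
The plan is to reduce everything to the one-variable function $h(\alpha)=\sqrt{\ell(\alpha)}$. Here
\[
 \ell(\alpha)=\alpha A+(2-\alpha)B=2B+\alpha(A-B),
\]
where $A=A_m(z)=d_M(p,x)^2\ge0$ and $B=B_m(z)=d_N(q,y)^2\ge0$ are fixed numbers once $m$ and $z$ are fixed. The map $\ell$ is affine in $\alpha$ and nonnegative on $(0,2)$, being a convex combination (up to the factor $2$) of two nonnegative numbers. Composing with the concave, nondecreasing function $t\mapsto\sqrt t$ on $[0,\infty)$ gives a concave function, since a concave nondecreasing function of an affine map is concave. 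This yields concavity of $\alpha\mapsto d_\alpha(m,z)$ on all of $(0,2)$ directly, with no differentiability required. This is the global statement; the derivative formulas are a refinement.

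Next, where $d_\alpha(m,z)>0$, i.e.\ $\ell(\alpha)>0$, I would differentiate by the chain rule. Since $\ell'(\alpha)=A-B$, we get
\[
 h'(\alpha)=\frac{A-B}{2h(\alpha)}.
\]
Differentiating once more gives
\[
 h''(\alpha)=-\frac{(A-B)\,h'(\alpha)}{2h(\alpha)^2}=-\frac{(A-B)^2}{4h(\alpha)^3}\le0,
\]
matching the stated formulas. It is worth noting that $\ell(\alpha)=0$ at some interior $\alpha$ forces $A=B=0$, because $\alpha$ and $2-\alpha$ are both positive. In that case $h\equiv0$ on $(0,2)$, so the degenerate set is either everything or nothing for a given pair $(m,z)$.

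For the aggregated objectives, $F_{n,\alpha}(m)=n^{-1}\sum_i d_\alpha(m,Z_i)$ is a finite nonnegative combination of concave functions of $\alpha$, hence concave. For $F_\alpha(m)=E[d_\alpha(m,Z)]$, I would apply the concavity inequality pointwise in $Z$ and take expectations. For $\alpha_1,\alpha_2\in(0,2)$ and $\lambda\in[0,1]$,
\[
 d_{\lambda\alpha_1+(1-\lambda)\alpha_2}(m,Z)\ge\lambda d_{\alpha_1}(m,Z)+(1-\lambda)d_{\alpha_2}(m,Z),
\]
and integrating this inequality preserves it.

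The only point needing care is integrability, which is where the phrase ``wherever well-defined'' enters. Since $d_\alpha\le\sqrt2\,d_1$, finiteness of $E[d_1(m,Z)]$ at a single $\alpha$ gives finiteness at all $\alpha\in(0,2)$. Conversely, $d_\alpha\ge\min(\alpha,2-\alpha)^{1/2}d_1$, so $F_\alpha(m)$ is either finite for all $\alpha\in(0,2)$ or infinite for all of them, and concavity holds on the finite branch. The remark ``globally by continuity'' is then automatic. The derivative formulas hold for $F$ by differentiating under the integral on $\{d_\alpha>0\}$, using dominated convergence with $|h'|\le |A-B|/(2h)\le C\sqrt{A+B}$ locally uniformly in $\alpha\in I_\eps$. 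I would nonetheless rely on the derivative-free composition argument for the concavity claim itself. I expect no real obstacle; the mildest step is this integrability and derivative-interchange bookkeeping.
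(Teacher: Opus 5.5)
Your proposal is correct and follows essentially the paper's argument: direct differentiation of $\{\alpha A+(2-\alpha)B\}^{1/2}$, points with $d_\alpha(m,z)=0$ handled by concavity of the square root of a nonnegative affine function, and concavity passed through sums and expectations. The only differences are emphasis and detail: you lead with the composition argument, which the paper offers as the equivalent alternative, and you add the integrability bookkeeping via $\min(\alpha,2-\alpha)^{1/2}d_1\le d_\alpha\le\sqrt2\,d_1$, which the paper leaves implicit.
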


\begin{theorem}[Boundary degeneracy of naive joint learning]\label{thm:boundary-degeneracy}
Let $[a,b]\subset(0,2)$ be a compact interval.  Then
\begin{equation}\label{eq:emp-boundary-degeneracy}
 \inf_{m\in\M,\ \alpha\in[a,b]}F_{n,\alpha}(m)
 =
 \min\left\{
 \inf_{m\in\M}F_{n,a}(m),
 \inf_{m\in\M}F_{n,b}(m)
 \right\}.
\end{equation}
The same identity holds at the population level:
\begin{equation}\label{eq:pop-boundary-degeneracy}
 \inf_{m\in\M,\ \alpha\in[a,b]}F_\alpha(m)
 =
 \min\left\{
 \inf_{m\in\M}F_a(m),
 \inf_{m\in\M}F_b(m)
 \right\}.
\end{equation}
\end{theorem}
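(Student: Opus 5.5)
The plan is to use Lemma~\ref{lem:concavity}: a concave function on a compact interval attains its minimum at an endpoint. Fix $m\in\M$. By Lemma~\ref{lem:concavity}, $\alpha\mapsto F_{n,\alpha}(m)$ is concave on $(0,2)$, hence on $[a,b]$. Write $\alpha=(1-t)a+tb$ with $t\in[0,1]$. Concavity gives
\[
 F_{n,\alpha}(m)\ge (1-t)F_{n,a}(m)+tF_{n,b}(m)\ge \min\{F_{n,a}(m),F_{n,b}(m)\}.
\]
Therefore $\inf_{\alpha\in[a,b]}F_{n,\alpha}(m)=\min\{F_{n,a}(m),F_{n,b}(m)\}$. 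The inequality $\le$ is trivial, since both endpoints lie in $[a,b]$.

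Next I would take the infimum over $m$ and interchange the two infima:
\[
 \inf_{m}\inf_{\alpha}F_{n,\alpha}(m)=\inf_m\min\{F_{n,a}(m),F_{n,b}(m)\}=\min\Bigl\{\inf_m F_{n,a}(m),\inf_m F_{n,b}(m)\Bigr\}.
\]
The last equality holds because an infimum of a pointwise minimum of two functions equals the minimum of their infima. This proves \eqref{eq:emp-boundary-degeneracy}. Note that no attainment of the infima is needed, so no compactness or existence of medians is invoked.

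For the population identity \eqref{eq:pop-boundary-degeneracy}, I would argue identically once concavity of $\alpha\mapsto F_\alpha(m)$ is in hand. This is the only point that needs care, because Lemma~\ref{lem:concavity} phrases concavity via derivatives. I would instead argue pointwise and directly. For each $z$, $\alpha\mapsto d_\alpha(m,z)$ is the composition of the affine map $\alpha\mapsto \alpha A_m(z)+(2-\alpha)B_m(z)\ge0$ with the concave nondecreasing function $\sqrt{\cdot}$, so it is concave everywhere, including where $d_\alpha=0$. Taking expectations preserves the concavity inequality.

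The remaining issue is possible infinite values. I would allow $F_\alpha(m)=+\infty$ and note that the inequality $F_\alpha(m)\ge(1-t)F_a(m)+tF_b(m)$ remains valid in $[0,\infty]$. In fact, $d_\alpha\le\max\{\sqrt{\alpha/a},\sqrt{(2-\alpha)/(2-b)}\}\,\ldots$, so finiteness at one $\alpha$ implies finiteness at all, by the comparability $\min(\alpha,2-\alpha)\,d_1^2\le d_\alpha^2\le 2d_1^2$. With that remark, the same two-step argument yields the population statement.
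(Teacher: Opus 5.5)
Your proposal is correct and follows essentially the same route as the paper: concavity of $\alpha\mapsto F_{n,\alpha}(m)$ (resp.\ $F_\alpha(m)$) for fixed $m$ puts the infimum over $[a,b]$ at an endpoint, and $\inf_m\min\{f(m),g(m)\}=\min\{\inf_m f,\inf_m g\}$ then finishes the argument. Your extra care about points where $d_\alpha=0$ and about possibly infinite $F_\alpha(m)$ is sound, but the finiteness comparability is not needed (so the unfinished $\max\{\cdot\}$ bound can simply be dropped), because the pointwise concavity inequality already integrates to a valid inequality in $[0,\infty]$.
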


\begin{corollary}[Endpoint collapse on the open interval]\label{cor:endpoint-collapse}
Assume $E[d_M(p_0,X)]<\infty$ and $E[d_N(q_0,Y)]<\infty$ for some $p_0\in M$ and $q_0\in N$.  Then
\begin{equation}\label{eq:endpoint-collapse}
 \inf_{m\in\M,\ \alpha\in(0,2)}F_\alpha(m)
 =
 \sqrt2\min\left\{
 \inf_{p\in M}E[d_M(p,X)],
 \inf_{q\in N}E[d_N(q,Y)]
 \right\}.
\end{equation}
\end{corollary}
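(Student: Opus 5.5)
Write $L(\alpha)=\inf_{m\in\M}F_\alpha(m)$ for $\alpha\in(0,2)$. The plan is to show two things: $\inf_{\alpha\in(0,2)}L(\alpha)$ equals $\lim_{\alpha\downarrow0}L(\alpha)$ or $\lim_{\alpha\uparrow2}L(\alpha)$, and these limits equal $\sqrt2\inf_qE[d_N(q,Y)]$ and $\sqrt2\inf_pE[d_M(p,X)]$ respectively.

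\textbf{Reduction to the endpoints.} By Lemma~\ref{lem:concavity}, each $\alpha\mapsto F_\alpha(m)$ is concave on $(0,2)$. Hence $L$ is concave, being an infimum of concave functions. The moment assumptions ensure $L$ is finite: use $d_\alpha\le\sqrt2\{d_M+d_N\}$ at $m=(p_0,q_0)$ together with $L\ge0$. A finite concave function on an open interval has one-sided limits $L(0^+)$ and $L(2^-)$ in $[-\infty,\infty)$, and by Theorem~\ref{thm:boundary-degeneracy} applied on $[a,b]$, letting $a\downarrow0$ and $b\uparrow2$, we get $\inf_{(0,2)}L=\min\{L(0^+),L(2^-)\}$. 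Concretely, on $[a,b]$ the infimum is $\min\{L(a),L(b)\}$, and a concave function is monotone near each endpoint, so these values converge to the endpoint limits. It therefore suffices to show $L(2^-)=\sqrt2\inf_pE[d_M(p,X)]$; the other endpoint is symmetric.

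\textbf{Evaluating the limit.} Write $c_M=\inf_pE[d_M(p,X)]$.
\begin{itemize}
\item Lower bound: $d_\alpha(m,z)\ge\sqrt\alpha\,d_M(p,x)$, so $L(\alpha)\ge\sqrt\alpha\,c_M$, which tends to $\sqrt2c_M$.
\item Upper bound: fix $\delta>0$ and pick $p$ with $E[d_M(p,X)]\le c_M+\delta$. Take $m=(p,q_0)$ and use $\sqrt{s+t}\le\sqrt s+\sqrt t$ to get $F_\alpha(m)\le\sqrt\alpha\,(c_M+\delta)+\sqrt{2-\alpha}\,E[d_N(q_0,Y)]$. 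Since $E[d_N(q_0,Y)]<\infty$, letting $\alpha\uparrow2$ gives $\limsup L(\alpha)\le\sqrt2(c_M+\delta)$.
\end{itemize}
Letting $\delta\to0$ gives $L(2^-)=\sqrt2c_M$. Combining the two endpoints yields \eqref{eq:endpoint-collapse}.

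\textbf{Main obstacle.} The delicate step is justifying that the infimum over the open interval is attained in the endpoint limits, rather than only over compact subintervals. I would handle it directly through the monotone-limit argument above rather than invoking compactness. The upper bound also needs finiteness of $E[d_N(q_0,Y)]$, which is exactly where the moment hypothesis enters.
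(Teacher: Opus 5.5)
Your proposal is correct and follows the same route as the paper. You reduce to compact subintervals via Lemma~\ref{lem:concavity} and Theorem~\ref{thm:boundary-degeneracy}, then identify the two endpoint limits. The only real difference is in that last step: the paper takes pointwise limits of $F_\alpha(p,q)$ at fixed $(p,q)$ by dominated convergence and leaves the exchange of $\lim_\alpha$ and $\inf_m$ implicit. Your uniform sandwich $\sqrt\alpha\,c_M\le L(\alpha)\le\sqrt\alpha\,(c_M+\delta)+\sqrt{2-\alpha}\,E[d_N(q_0,Y)]$ justifies that exchange explicitly, so your version is, if anything, more complete.
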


One may consider an alternative restriction  $\alpha\in[\eps,2-\eps]$.  Unfortunately, this does not solve the identification problem.  The naive joint minimizer then selects one of the artificial endpoints $\eps$ or $2-\eps$.  Thus the issue is not merely that $\alpha=0$ and $\alpha=2$ are inadmissible.  The raw objective has the wrong curvature for scale selection. This observation extends to the product of more than two manifolds. 

\begin{corollary}[Finite products]\label{cor:kfactor}
Let $\M=M_1\times\cdots\times M_K$ and let
\[
 d_w(m,z)^2=\sum_{j=1}^K w_jd_j(m_j,z_j)^2,
 \qquad
 w\in\Delta_K:=\{w_j\ge0,\ \sum_{j=1}^Kw_j=1\}.
\]
For fixed $m$ and $z$, the map $w\mapsto d_w(m,z)$ is concave on $\Delta_K$.  Hence naive minimization over $w$ of the product geometric-median objective attains its infimum at an extreme point of the simplex, thereby selecting a single factor.
\end{corollary}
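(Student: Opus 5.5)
The plan has two parts: establish concavity of $w\mapsto d_w(m,z)$ directly, then reduce the infimum over the simplex to its vertices exactly as in Theorem~\ref{thm:boundary-degeneracy}.

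\textbf{Concavity.} Fix $m$ and $z$. Set $a_j=d_j(m_j,z_j)^2\ge0$ and $a=(a_1,\ldots,a_K)$. Then
\[
d_w(m,z)=\phi\bigl(\langle w,a\rangle\bigr),\qquad \phi(t)=\sqrt t .
\]
This is the composition of a linear map $w\mapsto\langle w,a\rangle\ge0$ with the concave, nondecreasing function $\phi$ on $[0,\infty)$, so it is concave on $\Delta_K$. No differentiability is needed, so the points where $d_w=0$ cause no trouble. One can also restrict to a segment $w(t)=(1-t)w^0+tw^1$, which gives exactly the one-dimensional situation of Lemma~\ref{lem:concavity}.

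Averaging preserves concavity. Hence $w\mapsto F_{n,w}(m)=P_n d_w(m,\cdot)$ is concave. The same holds for $w\mapsto F_w(m)=E[d_w(m,Z)]$ whenever it is finite: concavity passes through the expectation pointwise, and the values lie in $[0,\infty]$ with the usual conventions.

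\textbf{Reduction to vertices.} Write $w=\sum_j w_j e_j$, where the $e_j$ are the vertices of $\Delta_K$. For fixed $m$, concavity gives
\[
F_w(m)\ge\sum_j w_jF_{e_j}(m)\ge\min_jF_{e_j}(m)\ge \min_j\inf_{m'}F_{e_j}(m').
\]
Taking the infimum over $(m,w)$ then yields
\[
\inf_{m,w}F_w(m)=\min_j\inf_m F_{e_j}(m).
\]
The reverse inequality is trivial, because the vertices belong to $\Delta_K$. The same argument applies verbatim to $F_{n,w}$.

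\textbf{Single-factor interpretation.} At the vertex $e_j$ we have $d_{e_j}(m,z)=d_j(m_j,z_j)$. So the objective at a vertex is the marginal median loss of factor $j$, and the minimizer selects a single factor.

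\textbf{Main obstacle: the word ``attains''.} If each marginal median exists, the infimum is attained at a vertex. If instead some pair $(m,w)$ is a minimizer, the chain of inequalities forces all of them to be equalities. Choose $j^*$ with $w_{j^*}>0$ and $F_{e_{j^*}}(m)$ minimal among the $F_{e_j}(m)$. Then $(m,e_{j^*})$ is also a minimizer, since by concavity
\[
F_{e_{j^*}}(m)\le\sum_j w_jF_{e_j}(m)\le F_w(m)=\inf .
\]
I expect the only subtlety to be phrasing this as ``an extreme point always achieves the infimum'' rather than claiming uniqueness. Non-extreme minimizers can exist in ties, for example when $F_w(m)$ is affine along a face, which happens when the $a_j$ are equal almost surely. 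The statement should therefore be read as saying that the infimum is achieved at, or equals the value at, an extreme point.
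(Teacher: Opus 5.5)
Your proof is correct and follows the paper's route. Concavity comes from composing the nonnegative affine map $w\mapsto\sum_j w_j d_j(m_j,z_j)^2$ with $\sqrt{\cdot}$ and passing it through sums and expectations, and then a concave function on the simplex is minimized at a vertex. Your explicit inequality $F_w(m)\ge\sum_j w_jF_{e_j}(m)$, the interchange of infima over $(m,w)$, and your remark that joint attainment needs the marginal medians to exist only spell out what the paper compresses into its one-line appeal to extreme points.
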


Theorems \ref{thm:boundary-degeneracy} and Corollary \ref{cor:endpoint-collapse} establish the central point that $\alpha$ is not identifiable from the raw product-median objective alone.  The remainder of the paper develops three alternatives, each corresponding to a distinct  principle.

\section{Approach I: the sensitivity path}\label{sec:path}

The no-go theorem shows that $\alpha$ cannot be learned by minimizing the raw
median objective jointly over location and scale.  A first alternative is to
avoid selecting a single scale and instead study the whole path
$\alpha\mapsto m_\alpha$.  This turns scale dependence into an inferential
object: one can estimate how the product median moves as the relative factor
weight changes.

\subsection{Continuity of the population path}

We first record that the population path is well behaved under the same
compactness and uniqueness assumptions used for fixed-scale medians.

\begin{proposition}[Continuity of the median path]\label{prop:path-continuity}
Under Assumptions \ref{ass:geometry} and \ref{ass:separation}, the map
\[
 I_\eps\ni\alpha\mapsto m_\alpha\in K
\]
is continuous.
\end{proposition}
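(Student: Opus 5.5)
The plan is the standard argmin-continuity argument: uniform separation plus equicontinuity of the objective in $\alpha$. Fix $\alpha_0\in I_\eps$ and a sequence $\alpha_k\to\alpha_0$ in $I_\eps$. We want $d_1(m_{\alpha_k},m_{\alpha_0})\to0$, and we argue by contradiction. Suppose there are $\eta>0$ and a subsequence with $d_1(m_{\alpha_k},m_{\alpha_0})\ge\eta$.

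\textbf{Step 1 (uniform continuity of $F_\alpha$ in $\alpha$ over $K$).} For $m\in K$ we compare $d_\alpha(m,z)$ and $d_\beta(m,z)$ pointwise. Writing $A=d_M(p,x)^2$ and $B=d_N(q,y)^2$,
\[
 |d_\alpha(m,z)-d_\beta(m,z)|
 \le \{|\alpha-\beta|\,|A-B|\}^{1/2}
 \le |\alpha-\beta|^{1/2}\,d_1(m,z)\cdot C,
\]
using $|\sqrt s-\sqrt t|\le\sqrt{|s-t|}$. Alternatively, on $I_\eps$ one can use
\[
 \frac{|\alpha-\beta|\,|A-B|}{d_\alpha+d_\beta}
 \le \frac{|\alpha-\beta|}{2\sqrt\eps}\,d_1(m,z).
\]
Hence
\[
 \sup_{m\in K}|F_\alpha(m)-F_\beta(m)| \le C_\eps|\alpha-\beta|\sup_{m\in K}E[d_1(m,Z)].
\]
The expectation is finite and uniformly bounded on the compact set $K$. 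This holds because $F_1$ is finite at the median (implicit in Assumption~\ref{ass:separation}, since $F_\alpha$ has a minimizer and $d_\alpha\ge\sqrt\eps\,d_1$), combined with the triangle inequality $d_1(m,z)\le d_1(m,m')+d_1(m',z)$ and the bounded diameter of $K$. Verifying this integrability from the stated assumptions is the only delicate point. If needed I would simply note that finiteness of $F_\alpha$ at one point of $K$ is part of the setup.

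\textbf{Step 2 (sandwich).} Put $\delta_k=\sup_K|F_{\alpha_k}-F_{\alpha_0}|$, so $\delta_k\to0$ by Step 1. Because $m_{\alpha_k}$ minimizes $F_{\alpha_k}$ and lies in $K$,
\[
 F_{\alpha_0}(m_{\alpha_k})\le F_{\alpha_k}(m_{\alpha_k})+\delta_k\le F_{\alpha_k}(m_{\alpha_0})+\delta_k\le F_{\alpha_0}(m_{\alpha_0})+2\delta_k.
\]

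\textbf{Step 3 (contradiction).} Assumption~\ref{ass:separation} at $\alpha_0$ gives, for $m\in K$ with $d_1(m,m_{\alpha_0})\ge\eta$,
\[
 F_{\alpha_0}(m)-F_{\alpha_0}(m_{\alpha_0})\ge c_\eta>0.
\]
Only pointwise separation at $\alpha_0$ is needed here; the infimum over $\alpha$ is more than required. This contradicts Step 2 once $2\delta_k<c_\eta$, so $m_{\alpha_k}\to m_{\alpha_0}$ in $d_1$. Continuity follows because $K$ carries the manifold topology induced by $d_1$.

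Step 1 is the main obstacle, namely the uniform-in-$m$ control together with integrability. Everything else is routine.
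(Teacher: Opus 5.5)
Your argument is correct, but it takes a different route from the paper's proof.

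\textbf{The paper's route.} The paper uses pure compactness. It takes joint continuity of $(\alpha,m)\mapsto F_\alpha(m)$ on $I_\eps\times K$ as given, without proof. It then extracts from $m_{\alpha_k}$ a sub-subsequence converging to some $m\in K$. Passing to the limit in $F_{\alpha_k}(m_{\alpha_k})\le F_{\alpha_k}(u)$ shows that $m$ minimizes $F_{\alpha}$ over $K$, and uniqueness gives $m=m_\alpha$. The separation condition \eqref{eq:uniform-separation} is never used. At a fixed $\alpha$ it is in fact a consequence of compactness, continuity and uniqueness.

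\textbf{Your route.} You prove the analytic input explicitly. The pointwise bound $|d_\alpha(m,z)-d_\beta(m,z)|\le|\alpha-\beta|\,d_1(m,z)/(2\sqrt\eps)$ gives Lipschitz dependence of $F_\alpha$ on $\alpha$, uniformly over $m\in K$. You then run the well-separated-minimum sandwich, and compactness of $K$ enters only through its finite diameter.

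\textbf{What each buys.} Your approach is self-contained; your Step~1 is exactly what justifies the continuity of $F$ that the paper asserts. It is also quantitative. With $S=\sup_{m\in K}E[d_1(m,Z)]$ and the uniform constant $c_\eta$, it gives $d_1(m_\alpha,m_\beta)<\eta$ whenever $|\alpha-\beta|<c_\eta\sqrt\eps/S$. That is uniform continuity of the path with an explicit modulus. The paper's argument is shorter and more general: it works for any jointly continuous objective with unique minimizers on a compact set, and needs no separation hypothesis.

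\textbf{Integrability.} Both arguments rely on the same tacit condition, $E[d_1(m,Z)]<\infty$ for some, hence every, $m\in K$. Strictly, existence of a minimizer does not give this, since a function identically equal to $+\infty$ has minimizers. However, uniqueness on a nondegenerate $K$, together with the finite differences in \eqref{eq:uniform-separation}, does. So your reading that finiteness is part of the setup is fair.
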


Thus the fixed-scale medians form a genuine path, rather than an unrelated
collection of minimizers.

\subsection{Uniform consistency}

The next result strengthens pointwise consistency to uniform convergence over
the scale interval.

\begin{theorem}[Uniform consistency of the empirical path]\label{thm:uniform-consistency}
Under Assumptions \ref{ass:geometry}, \ref{ass:separation}, and \ref{ass:empirical}, any measurable selection $\widehat m_{n,\alpha}$ satisfying
\[
 F_{n,\alpha}(\widehat m_{n,\alpha})
 \le
 \inf_{m\in K}F_{n,\alpha}(m)+o(1)
\]
uniformly in $\alpha\in I_\eps$ obeys
\begin{equation}\label{eq:uniform-consistency}
 \sup_{\alpha\in I_\eps}d_1(\widehat m_{n,\alpha},m_\alpha)\to0
\end{equation}
in outer probability.  If the Glivenko--Cantelli convergence in Assumption \ref{ass:empirical} holds almost surely, then \eqref{eq:uniform-consistency} holds almost surely.
\end{theorem}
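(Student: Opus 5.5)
The argument is the standard argmin-consistency proof for $M$-estimators, run uniformly over the index set $I_\eps$. Assumption~\ref{ass:empirical} supplies uniform convergence of the criteria, and Assumption~\ref{ass:separation} supplies a uniform well-separated minimum. We work throughout with $m\in K$, where $\widehat m_{n,\alpha}$ is understood as a near-minimizer over $K$.

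\textbf{Step 1 (uniform convergence of criteria).} Because $\mathcal F_K$ is $P$-Glivenko--Cantelli,
\[
\Delta_n:=\sup_{\alpha\in I_\eps,\,m\in K}|F_{n,\alpha}(m)-F_\alpha(m)|\to0
\]
in outer probability, and almost surely under the strengthened hypothesis.

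\textbf{Step 2 (near-minimality at the population level).} Let $\delta_n$ be the $o(1)$ slack, uniform in $\alpha$. Since $m_\alpha\in K$, for every $\alpha$ we have
\[
F_\alpha(\widehat m_{n,\alpha})\le F_{n,\alpha}(\widehat m_{n,\alpha})+\Delta_n\le F_{n,\alpha}(m_\alpha)+\delta_n+\Delta_n\le F_\alpha(m_\alpha)+\delta_n+2\Delta_n .
\]
Hence
\[
\sup_{\alpha\in I_\eps}\{F_\alpha(\widehat m_{n,\alpha})-F_\alpha(m_\alpha)\}\le \delta_n+2\Delta_n\to0 .
\]

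\textbf{Step 3 (separation converts this into distance).} Fix $\eta>0$ and let $c_\eta>0$ be the infimum in \eqref{eq:uniform-separation}. If $\sup_\alpha d_1(\widehat m_{n,\alpha},m_\alpha)>\eta$, then some $\alpha$ has $d_1(\widehat m_{n,\alpha},m_\alpha)\ge\eta$. For that $\alpha$, $F_\alpha(\widehat m_{n,\alpha})-F_\alpha(m_\alpha)\ge c_\eta$, which forces $\delta_n+2\Delta_n\ge c_\eta$. Therefore
\[
P^*\Bigl(\sup_{\alpha}d_1(\widehat m_{n,\alpha},m_\alpha)>\eta\Bigr)\le P^*(\delta_n+2\Delta_n\ge c_\eta)\to0 .
\]
In the almost sure case, the same inclusion holds pathwise, so eventually $\sup_\alpha d_1\le\eta$ almost surely. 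Intersecting over rational $\eta$ gives almost sure convergence.

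The only genuinely delicate point is the uniformity of the separation constant $c_\eta$ in $\alpha$. This is assumed directly in \eqref{eq:uniform-separation}, so it does not need to be derived from continuity of $(\alpha,m)\mapsto F_\alpha(m)$ and compactness of $I_\eps\times K$, together with Proposition~\ref{prop:path-continuity}. Beyond that, the main care needed is measure-theoretic: the suprema need not be measurable, so every statement is phrased with outer probability. The sup over $\alpha$ is handled by the event inclusion in Step 3, so no measurability of $\alpha\mapsto\widehat m_{n,\alpha}$ is required.
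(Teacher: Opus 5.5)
Your proposal is correct and follows essentially the same route as the paper: uniform Glivenko--Cantelli convergence of $F_{n,\alpha}$ over $I_\eps\times K$, the uniform separation constant $c_\eta$, and the near-minimizer inequality evaluated at $m_\alpha\in K$. The only difference is cosmetic. You first bound the population excess risk $\sup_\alpha\{F_\alpha(\widehat m_{n,\alpha})-F_\alpha(m_\alpha)\}\le\delta_n+2\Delta_n$ and then apply separation, whereas the paper argues by contradiction using $c_\eta/3$ thresholds.
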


The restriction to $I_\eps$ keeps the product geometry uniformly nondegenerate
and is used throughout the pathwise theory.

\subsection{Functional central limit theorem}

For pathwise inference, tangent vectors based at different $m_\alpha$ must be
compared in one vector space.  We therefore parallel transport all fluctuations
to a fixed reference tangent space.  Fix $\alpha_0\in I_\eps$.  By Assumption
\ref{ass:geometry} and Proposition \ref{prop:path-continuity}, after possibly
shrinking $K^+$, there are unique minimizing geodesics from $m_\alpha$ to
$m_{\alpha_0}$ for all $\alpha\in I_\eps$. Let
\[
 \Pi_\alpha:T_{m_\alpha}\M\to T_{m_{\alpha_0}}\M
\]
denote parallel transport along these geodesics.  Define
\[
 \mathbb Z_n(\alpha)
 =\frac1{\sqrt n}\sum_{i=1}^n
 \Pi_\alpha\psi_\alpha(Z_i;m_\alpha)
 \in T_{m_{\alpha_0}}\M.
\]
By the Donsker assumption, $\mathbb Z_n$ converges weakly in $\ell^\infty(I_\eps,T_{m_{\alpha_0}}\M)$ to a tight mean-zero Gaussian process $\mathbb Z$ with covariance kernel
\begin{equation}\label{eq:Z-cov-kernel}
 \Cov(\mathbb Z(\alpha),\mathbb Z(\beta))
 =
 \Cov\{\Pi_\alpha\psi_\alpha(Z;m_\alpha),
       \Pi_\beta\psi_\beta(Z;m_\beta)\}.
\end{equation}
Let
\[
 \widetilde A_\alpha= \Pi_\alpha A_\alpha\Pi_\alpha^{-1}
 :T_{m_{\alpha_0}}\M\to T_{m_{\alpha_0}}\M.
\]

\begin{theorem}[Functional CLT for the median path]\label{thm:functional-clt}
Assume \ref{ass:geometry}--\ref{ass:empirical}.  Suppose the empirical medians solve the local estimating equations
\[
 G_{n,\alpha}(\widehat m_{n,\alpha})=0
\]
with probability tending to one, uniformly over $\alpha\in I_\eps$, and that the usual uniform linearization of $G_{n,\alpha}$ around $m_\alpha$ holds.  Then
\begin{equation}\label{eq:uniform-linear-expansion}
 \sup_{\alpha\in I_\eps}
 \left\|
 \sqrt n\,\Pi_\alpha\log_{m_\alpha}(\widehat m_{n,\alpha})
 +
 \widetilde A_\alpha^{-1}\mathbb Z_n(\alpha)
 \right\|
 \to0
\end{equation}
in probability.  Consequently,
\begin{equation}\label{eq:functional-clt}
 \left\{
 \sqrt n\,\Pi_\alpha\log_{m_\alpha}(\widehat m_{n,\alpha})
 :\alpha\in I_\eps
 \right\}
 \rightsquigarrow
 \left\{-\widetilde A_\alpha^{-1}\mathbb Z(\alpha):\alpha\in I_\eps\right\}
\end{equation}
in $\ell^\infty(I_\eps,T_{m_{\alpha_0}}\M)$.
\end{theorem}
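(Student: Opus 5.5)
The plan is the standard $Z$-estimator argument, carried out uniformly in $\alpha$ and after transport to the fixed tangent space $T_{m_{\alpha_0}}\M$.

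\textbf{Localization.} By Theorem~\ref{thm:uniform-consistency}, $\sup_\alpha d_1(\widehat m_{n,\alpha},m_\alpha)\to0$ in probability. Hence, with probability tending to one, every $\widehat m_{n,\alpha}$ lies in a common neighborhood of the compact path $\{m_\alpha\}$. On this neighborhood Assumption~\ref{ass:geometry} guarantees that $\log_{m_\alpha}$ is single valued and smooth, and the transports $\Pi_\alpha$ are well defined and isometric. Write $h_{n,\alpha}=\log_{m_\alpha}(\widehat m_{n,\alpha})$, so that $\sup_\alpha\|h_{n,\alpha}\|=o_P(1)$.

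\textbf{Decomposition of the estimating equation.} Transport the estimating equation to $T_{m_\alpha}\M$ and split it as
\[
0=G_{n,\alpha}(\widehat m_{n,\alpha})
=\{G_{n,\alpha}(\widehat m_{n,\alpha})-G_\alpha(\widehat m_{n,\alpha})\}+G_\alpha(\widehat m_{n,\alpha}).
\]
The two pieces are handled as follows.
\begin{enumerate}
\item \emph{Empirical term.} By the uniform stochastic equicontinuity in Assumption~\ref{ass:empirical}, the empirical term equals $(G_{n,\alpha}-G_\alpha)(m_\alpha)$ plus $o_P(n^{-1/2})$ uniformly in $\alpha$. Since $G_\alpha(m_\alpha)=0$, this is $n^{-1/2}\,\Pi_\alpha^{-1}\mathbb Z_n(\alpha)+o_P(n^{-1/2})$.
\item \emph{Population term.} By the $C^1$ hypothesis of Assumption~\ref{ass:differentiability}, uniformly continuous on the compact set $\{(\alpha,m_\alpha)\}$, we get $G_\alpha(\widehat m_{n,\alpha})=A_\alpha h_{n,\alpha}+o(\|h_{n,\alpha}\|)$. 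The remainder is uniform in $\alpha$ because the modulus of continuity of $D_mG$ is uniform on a compact neighborhood. This is where the assumed ``usual uniform linearization'' enters.
\end{enumerate}

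\textbf{Solving for $h_{n,\alpha}$.} Apply $\Pi_\alpha$ and use $\widetilde A_\alpha=\Pi_\alpha A_\alpha\Pi_\alpha^{-1}$ to obtain
\[
\widetilde A_\alpha\Pi_\alpha h_{n,\alpha}=-n^{-1/2}\mathbb Z_n(\alpha)+o_P(n^{-1/2}+\|h_{n,\alpha}\|).
\]
Three facts then give the rate. First, $\inf_\alpha\sigma_{\min}(\widetilde A_\alpha)>0$, since $\Pi_\alpha$ is an isometry. Second, $\sup_\alpha\|\mathbb Z_n(\alpha)\|=O_P(1)$, because $\mathbb Z_n$ converges weakly in $\ell^\infty$. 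Third, the $o_P(\|h\|)$ term is absorbed into the left side. Together these give $\sup_\alpha\|h_{n,\alpha}\|=O_P(n^{-1/2})$, hence $o_P(\|h\|)=o_P(n^{-1/2})$. Multiplying by $\sqrt n\,\widetilde A_\alpha^{-1}$, whose norm is bounded uniformly in $\alpha$, yields \eqref{eq:uniform-linear-expansion}.

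\textbf{Weak convergence.} Under Assumption~\ref{ass:differentiability} the map $\alpha\mapsto\widetilde A_\alpha^{-1}$ is continuous and bounded on $I_\eps$. The operator $z(\cdot)\mapsto-\widetilde A_\cdot^{-1}z(\cdot)$ is therefore a continuous linear map on $\ell^\infty(I_\eps,T_{m_{\alpha_0}}\M)$. The continuous mapping theorem then gives $-\widetilde A^{-1}\mathbb Z_n\rightsquigarrow-\widetilde A^{-1}\mathbb Z$. Combined with \eqref{eq:uniform-linear-expansion} and Slutsky's lemma in $\ell^\infty$, this yields \eqref{eq:functional-clt}.

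\textbf{Main obstacle.} The delicate step is the uniform equicontinuity replacement. The scores $\psi_\alpha(\cdot;m)$ are nonsmooth at $z=m$ and at cut loci, and they live in different tangent spaces. I would handle this in three moves. First, express everything in a smooth orthonormal frame obtained by parallel transport to $T_{m_{\alpha_0}}\M$. Second, note that $L^2(P)$ continuity of $(\alpha,m)\mapsto\psi_\alpha(\cdot;m)$ at $(\alpha,m_\alpha)$ follows from dominated convergence: $\psi$ is bounded by $\eps^{-1/2}$, and Assumption~\ref{ass:no-singular} makes the singular set null. Third, since $\widehat m_{n,\alpha}\to m_\alpha$ uniformly, the increments $\mathbb G_n\{\psi_\alpha(\cdot;\widehat m_{n,\alpha})-\psi_\alpha(\cdot;m_\alpha)\}$ vanish uniformly by the Donsker property. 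The smoothness of the transports in $(\alpha,m)$ contributes only deterministic $o(1)$ factors multiplying $O_P(1)$ terms.
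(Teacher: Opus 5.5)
Your proposal is correct and follows essentially the same route as the paper: linearize the estimating equation around $m_\alpha$ using $A_\alpha$ and uniform stochastic equicontinuity, transport to $T_{m_{\alpha_0}}\M$, invert the uniformly bounded $\widetilde A_\alpha^{-1}$, and conclude with the continuous mapping theorem and Slutsky's lemma in $\ell^\infty$. What you add is the explicit absorption argument giving $\sup_\alpha\|\log_{m_\alpha}\widehat m_{n,\alpha}\|=O_{\Pbb}(n^{-1/2})$; the paper's claim $\sup_\alpha\sqrt n\|R_{n,\alpha}\|=o_{\Pbb}(1)$ tacitly relies on this rate (or on the assumed ``usual uniform linearization'') without deriving it.
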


Pointwise normality is recovered by evaluating the functional limit at a fixed
scale value. For each fixed $\alpha$, Theorem \ref{thm:functional-clt} gives
\[
 \sqrt n\log_{m_\alpha}(\widehat m_{n,\alpha})
 \Rightarrow
 N(0,V_\alpha),
\]
where
\begin{equation}\label{eq:fixed-alpha-cov}
 V_\alpha=A_\alpha^{-1}\Sigma_\alpha A_\alpha^{-T},
 \qquad
 \Sigma_\alpha=\Var\{\psi_\alpha(Z;m_\alpha)\}.
\end{equation}
The full covariance kernel of the path is obtained from \eqref{eq:Z-cov-kernel} by applying $\widetilde A_\alpha^{-1}$ and $\widetilde A_\beta^{-1}$. The functional version additionally captures joint fluctuations across
different scale values and supports simultaneous confidence bands for scalar
summaries of the path.

\subsection{Differentiability of the population path}

We now quantify the deterministic movement of the population target as
$\alpha$ changes.  Differentiating the score equation by the implicit function
theorem gives the following sensitivity formula.

\begin{theorem}[Derivative of the median path]\label{thm:path-derivative}
Assume \ref{ass:geometry}, \ref{ass:no-singular}, and \ref{ass:differentiability}.  Then the map $\alpha\mapsto m_\alpha$ is continuously differentiable on $I_\eps$.  Its derivative $\dot m_\alpha\in T_{m_\alpha}\M$ satisfies
\begin{equation}\label{eq:path-derivative}
 \dot m_\alpha=-A_\alpha^{-1}B_\alpha,
\end{equation}
where
\[
 B_\alpha=\partial_\alpha G_\alpha(m)\big|_{m=m_\alpha}\in T_{m_\alpha}\M.
\]
In the score convention \eqref{eq:score},
\begin{equation}\label{eq:Balpha}
 B_\alpha
 =
 E\left[
 -\frac{d_M(p_\alpha,X)^2-d_N(q_\alpha,Y)^2}
 {2d_\alpha(m_\alpha,Z)^3}
 \log_{m_\alpha}Z
 \right],
\end{equation}
where $m_\alpha=(p_\alpha,q_\alpha)$ and $\log_{m_\alpha}Z=(\log_{p_\alpha}X,\log_{q_\alpha}Y)$.
\end{theorem}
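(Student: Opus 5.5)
The plan is to apply the implicit function theorem to the score equation $G_\alpha(m)=0$, written in local coordinates around each $m_{\alpha_0}$. Fix $\alpha_0\in I_\eps$ and work in normal coordinates $\phi=\log_{m_{\alpha_0}}$ on a neighborhood $U\subset K^+$ of $m_{\alpha_0}$. Using parallel transport along the unique minimizing geodesics (Assumption \ref{ass:geometry}), pull the vector field $G_\alpha$ back to a map
\[
H(\alpha,w)=\Pi_{w}G_\alpha(\exp_{m_{\alpha_0}}w)\in T_{m_{\alpha_0}}\M .
\]
By Assumption \ref{ass:differentiability}, $H$ is $C^1$ near $(\alpha_0,0)$. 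At a zero of the field the connection terms vanish, so $D_wH(\alpha_0,0)=A_{\alpha_0}$, which is invertible. The implicit function theorem then gives a $C^1$ curve $w(\alpha)$ with $H(\alpha,w(\alpha))=0$ near $\alpha_0$, and it is the unique zero in a small neighborhood.

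Next I need to identify this curve with the median path. By Assumption \ref{ass:no-singular}, $F_\alpha$ is differentiable at points near the medians with gradient $-G_\alpha$ (up to the positive rescaling noted after \eqref{eq:score}), so $G_\alpha(m_\alpha)=0$. By Proposition \ref{prop:path-continuity}, $m_\alpha\to m_{\alpha_0}$ as $\alpha\to\alpha_0$, so $\phi(m_\alpha)$ enters the uniqueness neighborhood. Hence $\phi(m_\alpha)=w(\alpha)$, and the path is $C^1$ near $\alpha_0$. Since $\alpha_0$ was arbitrary, the path is $C^1$ on $I_\eps$. Continuity of the derivative also follows from the explicit formula below, since $A_\alpha^{-1}$ and $B_\alpha$ are continuous in $\alpha$.

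Differentiating $G_\alpha(m_\alpha)=0$ covariantly along the curve gives $B_\alpha+A_\alpha\dot m_\alpha=0$, which is \eqref{eq:path-derivative}. For \eqref{eq:Balpha}, I fix $m$ and differentiate $\psi_\alpha(Z;m)=\log_mZ/r_\alpha(m,Z)$ in $\alpha$. Here $\log_mZ$ does not depend on $\alpha$, because the Levi--Civita connection is invariant under constant rescaling. Lemma \ref{lem:concavity} gives
\[
\partial_\alpha r_\alpha=\frac{d_M^2-d_N^2}{2r_\alpha}.
\]
Hence
\[
\partial_\alpha\psi_\alpha=-\frac{d_M^2-d_N^2}{2r_\alpha^3}\log_mZ .
\]
The remaining step is to exchange $\partial_\alpha$ with $E$. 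The integrand is bounded by $|d_M^2-d_N^2|/(2r_\alpha^2)\cdot\|\log_mZ\|_{g_1}/r_\alpha$. Each factor is bounded uniformly for $\alpha\in I_\eps$: we have $r_\alpha^2\ge\eps(d_M^2+d_N^2)$, and $\|\log_mZ\|_{g_1}=d_1(m,Z)$. This gives a dominating constant, and dominated convergence justifies the exchange. Evaluating at $m=m_\alpha$ yields \eqref{eq:Balpha}.

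The main obstacle is keeping the IFT argument intrinsic: $G_\alpha(m)$ lives in a tangent space that moves with $m$. This is why I work with the transported map $H$. I must check that its $w$-derivative at a zero equals $A_\alpha$, and not $A_\alpha$ plus a curvature correction; this holds because the field vanishes there. The only other subtlety is that Assumption \ref{ass:differentiability} presupposes $\alpha\mapsto G_\alpha(m)$ is differentiable. The domination argument above shows this directly for the $\alpha$-derivative, so the formula for $B_\alpha$ is consistent with that assumption.
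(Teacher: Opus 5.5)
Your proposal is correct and follows the same route as the paper. The paper likewise applies the implicit function theorem to $G_\alpha(m)=0$ in normal coordinates at each $m_\alpha$, then differentiates $\psi_\alpha(Z;m)=\log_mZ/r_\alpha(m,Z)$ in $\alpha$ with the logarithm held fixed. You are more explicit than the paper in two places. First, you identify the implicit-function branch with $m_\alpha$ via Proposition~\ref{prop:path-continuity}; that proposition formally needs the uniqueness in Assumption~\ref{ass:separation}, which the paper's proof also uses tacitly. Second, where the paper appeals to Assumption~\ref{ass:differentiability} to exchange $\partial_\alpha$ and $E$, you give a direct constant bound $(2\eps^{3/2})^{-1}$ on $\|\partial_\alpha\psi_\alpha\|_{g_1}$.
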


The formula separates scale sensitivity into two parts: $B_\alpha$ measures how
the score changes with scale, while $A_\alpha^{-1}$ measures the local
conditioning of the median equation.

\subsection{Sensitivity indices and confidence bands}
The derivative suggests simple scalar diagnostics.  The intrinsic sensitivity
index is
\begin{equation}\label{eq:sensitivity-index}
 S(\alpha)=\|\dot m_\alpha\|_{g_\alpha}
 =\|A_\alpha^{-1}B_\alpha\|_{g_\alpha}.
\end{equation}
Factorwise sensitivities can also be reported:
\[
 S_M(\alpha)=\|\dot p_\alpha\|_{g_M},
 \qquad
 S_N(\alpha)=\|\dot q_\alpha\|_{g_N}.
\]
Large values indicate local instability with respect to relative metric scaling.

The functional CLT also yields simultaneous confidence bands for scalar summaries of the path.  For example, if $\varphi:\M\to\R$ is smooth, the delta method gives
\[
 \sqrt n\{\varphi(\widehat m_{n,\alpha})-\varphi(m_\alpha)\}
 \rightsquigarrow
 D\varphi_{m_\alpha}\{-A_\alpha^{-1}\mathbb Z(\alpha)\}
\]
in $\ell^\infty(I_\eps)$.  Bootstrap versions are obtained by resampling observations and recomputing the whole path on a grid. A rigorous bootstrap theorem follows from the same arguments as Theorem~\ref{thm:functional-clt}, replacing $P$ by $P_n$ and using conditional Donsker convergence \citep{bickel_1981_AsymptoticTheoryBootstrap, vandervaart_1996_WeakConvergenceEmpirical, bhattacharya_2016_OmnibusCLTsFrechet}.

\section{Approach II: robust scale calibration}\label{sec:scale-calibration}

The sensitivity path is useful when one wants to report scale dependence.  In
many applications, however, a single default median is needed.  We propose to
obtain it by standardizing each factor distance with a robust marginal scale
before forming the product metric.

\subsection{Marginal medians and marginal radial scales}

The calibration uses marginal information only.  This separates unit
standardization from the coupled product-median objective and avoids the
degeneracy of joint minimization over $\alpha$.  Let $p_0$ and $q_0$ be the
marginal geometric medians:
\begin{equation}\label{eq:marginal-medians}
 p_0=\argmin_{p\in M}E[d_M(p,X)],
 \qquad
 q_0=\argmin_{q\in N}E[d_N(q,Y)].
\end{equation}
Define marginal radial random variables
\[
 R_M=d_M(p_0,X),
 \qquad
 R_N=d_N(q_0,Y).
\]
Let
\begin{equation}\label{eq:pop-scales}
 s_M=\operatorname{med}(R_M),
 \qquad
 s_N=\operatorname{med}(R_N),
\end{equation}
where $\operatorname{med}$ denotes the lower or unique median.  We assume throughout this section that
\[
 0<s_M<\infty,
 \qquad
 0<s_N<\infty.
\]

The empirical marginal medians are
\begin{equation}\label{eq:emp-marginal-medians}
 \widehat p_n\in\argmin_{p\in M}\frac1n\sum_{i=1}^n d_M(p,X_i),
 \qquad
 \widehat q_n\in\argmin_{q\in N}\frac1n\sum_{i=1}^n d_N(q,Y_i),
\end{equation}
and the empirical radial scales are
\begin{equation}\label{eq:emp-scales}
 \widehat s_M=\operatorname{med}_{1\le i\le n}\ d_M(\widehat p_n,X_i),
 \qquad
 \widehat s_N=\operatorname{med}_{1\le i\le n}\ d_N(\widehat q_n,Y_i).
\end{equation}
The scale-calibrated weight is
\begin{equation}\label{eq:alpha-sc}
 \widehat\alpha_{\rm sc}
 =2\frac{\widehat s_M^{-2}}{\widehat s_M^{-2}+\widehat s_N^{-2}}
 =2\frac{\widehat s_N^2}{\widehat s_M^2+\widehat s_N^2}.
\end{equation}
The corresponding calibrated product median is
\begin{equation}\label{eq:calibrated-median}
 \widehat m_{\rm sc}=\widehat m_{n,\widehat\alpha_{\rm sc}}.
\end{equation}
If one factor has larger marginal spread, its inverse-scale weight is smaller;
hence calibration prevents that factor from dominating only because of units.

\subsection{Equivalence to standardized distances}

The formula for $\widehat\alpha_{\rm sc}$ is a normalized version of
standardized product-distance minimization.  The scale-calibrated procedure is
equivalent to minimizing
\begin{equation}\label{eq:standardized-distance}
 d_{\rm std}(m,Z)^2
 =\frac{d_M(p,X)^2}{\widehat s_M^2}
 +\frac{d_N(q,Y)^2}{\widehat s_N^2}.
\end{equation}
Indeed, setting
\[
 \lambda_M=\widehat s_M^{-2},
 \qquad
 \lambda_N=\widehat s_N^{-2},
\]
and normalizing $\lambda_M+\lambda_N=2$ gives \eqref{eq:alpha-sc}.  

Multiplying the entire distance by a positive constant does not change the
median. The key property of this construction is exact invariance to multiplicative
changes of units in either factor metric.
\begin{theorem}[Scale equivariance]\label{thm:scale-equivariance}
Suppose the factor distances are rescaled by positive constants:
\[
 d_M' = c_Md_M,
 \qquad
 d_N'=c_Nd_N.
\]
Let $\widehat m_{\rm sc}'$ be the calibrated product median computed from $d_M'$ and $d_N'$.  Then
\[
 \widehat m_{\rm sc}'=\widehat m_{\rm sc},
\]
provided the same measurable selections are used when minimizers are nonunique.
\end{theorem}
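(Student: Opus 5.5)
The plan is to track each ingredient of $\widehat m_{\rm sc}$ through the rescaling $d_M'=c_Md_M$, $d_N'=c_Nd_N$ in the order in which the procedure computes it. We never need to rescale the Riemannian metrics themselves; all arguments use only the factor distances.

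\textbf{Step 1: marginal medians.} The empirical marginal objectives satisfy
\[
 \frac1n\sum_{i=1}^n d_M'(p,X_i)=c_M\,\frac1n\sum_{i=1}^n d_M(p,X_i),
\]
and similarly on $N$. Multiplying an objective by a positive constant does not change its argmin set. With the same measurable selections, this gives $\widehat p_n'=\widehat p_n$ and $\widehat q_n'=\widehat q_n$.

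\textbf{Step 2: radial scales.} The radial distances become $d_M'(\widehat p_n,X_i)=c_M d_M(\widehat p_n,X_i)$. The sample median (lower or unique) commutes with multiplication by a positive constant, because the map $t\mapsto c_Mt$ is order preserving. Hence $\widehat s_M'=c_M\widehat s_M$ and $\widehat s_N'=c_N\widehat s_N$. Both remain in $(0,\infty)$, so the calibration is well defined.

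\textbf{Step 3: the standardized objective.} The standardized squared distance \eqref{eq:standardized-distance} computed from the primed quantities is
\[
 \frac{c_M^2d_M(p,X)^2}{c_M^2\widehat s_M^2}+\frac{c_N^2d_N(q,Y)^2}{c_N^2\widehat s_N^2}=d_{\rm std}(m,Z)^2.
\]
It therefore coincides exactly with the unprimed one. Equivalently, one can check \eqref{eq:alpha-sc} directly. The primed weight $\widehat\alpha_{\rm sc}'$ is formed from the primed distances, so it weights $c_M^2d_M^2$ and $c_N^2d_N^2$. The resulting primed scaled distance is
\[
 \widehat\alpha_{\rm sc}'c_M^2d_M^2+(2-\widehat\alpha_{\rm sc}')c_N^2d_N^2
 =\frac{2c_M^2c_N^2\widehat s_N^2}{c_M^2\widehat s_M^2+c_N^2\widehat s_N^2}\Bigl(\frac{d_M^2}{\widehat s_M^2}\Bigr)\widehat s_M^2\widehat s_N^2\cdot\frac{1}{\widehat s_N^2}+\cdots.
\]
Simplifying shows that it equals $\kappa\,d_{\widehat\alpha_{\rm sc}}(m,Z)^2$, where
\[
 \kappa=\frac{c_M^2c_N^2(\widehat s_M^2+\widehat s_N^2)}{c_M^2\widehat s_M^2+c_N^2\widehat s_N^2}>0
\]
does not depend on $m$. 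I would verify this factorization by writing both sides as multiples of $d_M^2/\widehat s_M^2+d_N^2/\widehat s_N^2$: each normalized form equals $\bigl[2/(\widehat s_M^{-2}+\widehat s_N^{-2})\bigr]$ times the standardized form, up to the unit factors.

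\textbf{Step 4: conclusion.} Consequently $F'_{n,\widehat\alpha'_{\rm sc}}(m)=\sqrt\kappa\,F_{n,\widehat\alpha_{\rm sc}}(m)$ for all $m\in\M$. The two objectives therefore have identical minimizer sets, and the shared selection gives $\widehat m_{\rm sc}'=\widehat m_{\rm sc}$.

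\textbf{Main obstacle.} The only delicate points are bookkeeping ones. First, the selection convention must be carried consistently through the three nested argmin and median operations, since nonuniqueness could otherwise break the equalities. Second, one must confirm that the normalization $\lambda_M+\lambda_N=2$ introduces only an $m$-independent constant. I would handle the second cleanly by proving the general identity
\[
 \alpha d_M^2+(2-\alpha)d_N^2=\frac{2}{\lambda_M+\lambda_N}\,(\lambda_Md_M^2+\lambda_Nd_N^2)
\]
with $\alpha=2\lambda_M/(\lambda_M+\lambda_N)$. Applying this identity in both the primed and unprimed settings reduces everything to the invariance of $d_{\rm std}$ from Step 3.
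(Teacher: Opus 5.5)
Your proof is correct and follows the same route as the paper: the marginal medians are unchanged, the radial scales become $c_M\widehat s_M$ and $c_N\widehat s_N$, and the standardized objective is therefore exactly invariant, so its minimizer is unchanged. You also check explicitly that the normalized weight $\widehat\alpha_{\rm sc}$ rescales the objective only by the $m$-independent constant $\kappa$; the paper covers this step with its earlier remark that multiplying the whole distance by a positive constant does not change the median.
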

Thus the calibrated median depends on the relative geometry of the data, not on
the arbitrary units used to measure the factor distances.

\subsection{Consistency}

The population target of calibration is obtained by replacing the empirical
marginal scales with their population limits. 

\begin{assumption}[Marginal scale consistency]\label{ass:scale-consistency}
The marginal medians and scales satisfy
\[
 \widehat p_n\to p_0,
 \qquad
 \widehat q_n\to q_0,
 \qquad
 \widehat s_M\to s_M,
 \qquad
 \widehat s_N\to s_N
\]
in probability, with $s_M,s_N>0$.
\end{assumption}

\begin{theorem}[Consistency of the calibrated scale and median]\label{thm:calibration-consistency}
Under Assumption \ref{ass:scale-consistency},
\begin{equation}\label{eq:alpha-sc-limit}
 \widehat\alpha_{\rm sc}\to
 \alpha_{\rm sc}:=2\frac{s_M^{-2}}{s_M^{-2}+s_N^{-2}}
 =2\frac{s_N^2}{s_M^2+s_N^2}
\end{equation}
in probability.  If, in addition, Theorem \ref{thm:uniform-consistency} holds on an interval $I_\eps$ containing $\alpha_{\rm sc}$ and $P(\widehat\alpha_{\rm sc}\in I_\eps)\to1$, then
\[
 \widehat m_{n,\widehat\alpha_{\rm sc}}\to m_{\alpha_{\rm sc}}
\]
in probability.
\end{theorem}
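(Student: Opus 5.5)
The plan has two parts: a continuous-mapping argument for the scale, then a combination of uniform consistency with continuity of the median path.

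\textbf{First part.} Define $h:(0,\infty)^2\to(0,2)$ by
\[
 h(s,t)=\frac{2t^2}{s^2+t^2}.
\]
This map is continuous, and $\widehat\alpha_{\rm sc}=h(\widehat s_M,\widehat s_N)$ while $\alpha_{\rm sc}=h(s_M,s_N)$. Assumption~\ref{ass:scale-consistency} gives $(\widehat s_M,\widehat s_N)\to(s_M,s_N)$ in probability, and the limit lies in the open set $(0,\infty)^2$ because $s_M,s_N>0$. The continuous mapping theorem then yields \eqref{eq:alpha-sc-limit}. The only care needed is that $\widehat s_M$ or $\widehat s_N$ could equal $0$ on an event of vanishing probability. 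On that event I would define $h$ arbitrarily, for example by extending it continuously to a neighborhood, which does not affect convergence in probability.

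\textbf{Second part.} I would use the triangle inequality
\[
 d_1(\widehat m_{n,\widehat\alpha_{\rm sc}},m_{\alpha_{\rm sc}})
 \le \sup_{\alpha\in I_\eps}d_1(\widehat m_{n,\alpha},m_\alpha)
 + d_1(m_{\widehat\alpha_{\rm sc}},m_{\alpha_{\rm sc}}).
\]
This bound is valid on the event $\{\widehat\alpha_{\rm sc}\in I_\eps\}$, whose probability tends to one by hypothesis. The complement event therefore contributes only $o(1)$ to any probability bound.

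\textbf{First term.} It is $o_P(1)$ by Theorem~\ref{thm:uniform-consistency}. The uniformity over $\alpha$ is essential here, because $\widehat\alpha_{\rm sc}$ is random and depends on the same sample. A pointwise consistency result at a fixed $\alpha$ could not be evaluated at a data-dependent scale. This is exactly why the statement requires Theorem~\ref{thm:uniform-consistency} rather than fixed-$\alpha$ consistency.

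\textbf{Second term.} Proposition~\ref{prop:path-continuity} says $\alpha\mapsto m_\alpha$ is continuous on the compact set $I_\eps$, hence uniformly continuous. Fix $\eta>0$ and choose $\delta>0$ such that $|\alpha-\alpha_{\rm sc}|<\delta$ with $\alpha\in I_\eps$ implies $d_1(m_\alpha,m_{\alpha_{\rm sc}})<\eta$. Then
\[
 P\{d_1(m_{\widehat\alpha_{\rm sc}},m_{\alpha_{\rm sc}})\ge\eta\}
 \le P(|\widehat\alpha_{\rm sc}-\alpha_{\rm sc}|\ge\delta)+P(\widehat\alpha_{\rm sc}\notin I_\eps)\to0,
\]
using the first part for the first probability and the hypothesis for the second.

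\textbf{Main obstacle.} The only delicate point is measurability of the composite $\widehat m_{n,\widehat\alpha_{\rm sc}}$, since a measurable selection is evaluated at a random index. I would handle this as Theorem~\ref{thm:uniform-consistency} does, by stating the conclusion in outer probability. The supremum bound controls the first term for whatever selection is used, so no joint measurability in $\alpha$ is needed.
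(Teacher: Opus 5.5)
Your proposal is correct and follows essentially the same argument as the paper. The paper applies the continuous mapping theorem to get $\widehat\alpha_{\rm sc}\to\alpha_{\rm sc}$. It then uses the same triangle-inequality split, with uniform consistency handling the first term and continuity of the median path handling the second. Your version spells out steps the paper leaves implicit: restricting to the event $\{\widehat\alpha_{\rm sc}\in I_\eps\}$, bounding the first term by the supremum over $I_\eps$, and the $\delta$--$\eta$ continuity argument.
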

Therefore the calibrated estimator targets the scale-standardized median
$m_{\alpha_{\rm sc}}$.

\subsection{Two-step asymptotic distribution}

We now account for the uncertainty in estimating $\alpha$. Since the scale is to be estimated, its uncertainty also contributes to the limiting law of the calibrated median.

\begin{assumption}[Asymptotic linearity of the calibrated scale]\label{ass:alpha-if}
There exists a mean-zero square-integrable random variable $\phi_\alpha(Z)$ such that
\begin{equation}\label{eq:alpha-if}
 \sqrt n(\widehat\alpha_{\rm sc}-\alpha_{\rm sc})
 =\frac1{\sqrt n}\sum_{i=1}^n\phi_\alpha(Z_i)+o_{\Pbb}(1).
\end{equation}
\end{assumption}

Appendix \ref{app:scale-if} gives primitive conditions under which \eqref{eq:alpha-if} follows from asymptotic linearity of marginal medians and radial quantiles.

Let
\[
 m_0=m_{\alpha_{\rm sc}},
 \qquad
 A_0=A_{\alpha_{\rm sc}},
 \qquad
 B_0=B_{\alpha_{\rm sc}},
 \qquad
 \psi_0(Z)=\psi_{\alpha_{\rm sc}}(Z;m_0).
\]

The expansion below is the fixed-scale median expansion plus a correction term
from the influence function of $\widehat\alpha_{\rm sc}$.
\begin{theorem}[Two-step CLT for the scale-calibrated median]\label{thm:calibrated-clt}
Assume the conditions of Theorem \ref{thm:functional-clt} in a neighborhood of $\alpha_{\rm sc}$ and Assumption \ref{ass:alpha-if}.  Then
\begin{equation}\label{eq:calibrated-linearization}
 \sqrt n\log_{m_0}(\widehat m_{n,\widehat\alpha_{\rm sc}})
 =
 -A_0^{-1}\frac1{\sqrt n}\sum_{i=1}^n
 \left\{\psi_0(Z_i)+B_0\phi_\alpha(Z_i)\right\}
 +o_{\Pbb}(1).
\end{equation}
Consequently,
\begin{equation}\label{eq:calibrated-clt}
 \sqrt n\log_{m_0}(\widehat m_{n,\widehat\alpha_{\rm sc}})
 \Rightarrow
 N(0,V_{\rm sc}),
\end{equation}
where
\begin{equation}\label{eq:Vsc}
 V_{\rm sc}
 =
 A_0^{-1}
 \Var\{\psi_0(Z)+B_0\phi_\alpha(Z)\}
 A_0^{-T}.
\end{equation}
\end{theorem}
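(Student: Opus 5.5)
The plan is to decompose the error of the two-step estimator into a fixed-scale fluctuation plus a path-movement term:
\[
 \log_{m_0}(\widehat m_{n,\widehat\alpha_{\rm sc}})
 \approx
 \Pi_{\widehat\alpha}\log_{m_{\widehat\alpha}}(\widehat m_{n,\widehat\alpha})
 +\log_{m_0}(m_{\widehat\alpha}),
 \qquad \widehat\alpha=\widehat\alpha_{\rm sc}.
\]
We take $\alpha_0=\alpha_{\rm sc}$ as the reference scale in Theorem~\ref{thm:functional-clt}, so that $\Pi_{\alpha_0}$ is the identity and $\widetilde A_{\alpha_0}=A_0$. Both terms are $O_{\Pbb}(n^{-1/2})$, and all points lie in a small strongly convex neighborhood of $m_0$. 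The remainder from composing logarithms through the intermediate point $m_{\widehat\alpha}$ is therefore of second order, by smoothness of $\log$ and parallel transport on $K^+$ (Assumption~\ref{ass:geometry}). In particular it is $o_{\Pbb}(n^{-1/2})$.

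For the first term we use the uniform expansion \eqref{eq:uniform-linear-expansion}. Since $P(\widehat\alpha\in I_\eps)\to1$ by Theorem~\ref{thm:calibration-consistency}, uniformity over $\alpha$ allows us to plug in the random $\widehat\alpha$:
\[
 \sqrt n\,\Pi_{\widehat\alpha}\log_{m_{\widehat\alpha}}(\widehat m_{n,\widehat\alpha})
 =-\widetilde A_{\widehat\alpha}^{-1}\mathbb Z_n(\widehat\alpha)+o_{\Pbb}(1).
\]
We then replace $\widehat\alpha$ by $\alpha_{\rm sc}$ inside this expression.
\begin{itemize}
\item For $\mathbb Z_n$: the functional weak convergence gives asymptotic tightness, and the limit $\mathbb Z$ has continuous sample paths with respect to the standard-deviation semimetric. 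This semimetric is continuous in $\alpha$ by the $L^2$-continuity of $\alpha\mapsto\Pi_\alpha\psi_\alpha(\cdot;m_\alpha)$, which follows from dominated convergence and Proposition~\ref{prop:path-continuity}. Stochastic equicontinuity (Assumption~\ref{ass:empirical}) then yields $\mathbb Z_n(\widehat\alpha)-\mathbb Z_n(\alpha_{\rm sc})=o_{\Pbb}(1)$.
\item For $\widetilde A_{\widehat\alpha}^{-1}$: this converges to $A_0^{-1}$ by Assumption~\ref{ass:differentiability} together with the uniform lower bound on $\sigma_{\min}$.
\end{itemize}
Combining these, the first term equals $-A_0^{-1}n^{-1/2}\sum_i\psi_0(Z_i)+o_{\Pbb}(1)$.

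For the second term we use Theorem~\ref{thm:path-derivative}: $\alpha\mapsto m_\alpha$ is $C^1$ with $\dot m_{\alpha_{\rm sc}}=-A_0^{-1}B_0$. A first-order Taylor expansion in normal coordinates at $m_0$ gives
\[
 \sqrt n\log_{m_0}(m_{\widehat\alpha})
 =-A_0^{-1}B_0\sqrt n(\widehat\alpha-\alpha_{\rm sc})+o_{\Pbb}(1),
\]
because $\sqrt n(\widehat\alpha-\alpha_{\rm sc})=O_{\Pbb}(1)$ and the derivative is continuous. Substituting Assumption~\ref{ass:alpha-if} produces $-A_0^{-1}B_0n^{-1/2}\sum_i\phi_\alpha(Z_i)$. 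Adding the two terms gives \eqref{eq:calibrated-linearization}.

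The summand $\psi_0(Z)+B_0\phi_\alpha(Z)$ is i.i.d., mean zero (since $G_{\alpha_{\rm sc}}(m_0)=0$ and $E\phi_\alpha=0$), and square-integrable ($\psi_0$ has norm bounded by $1/\min(\sqrt\alpha,\sqrt{2-\alpha})$). The multivariate CLT and Slutsky's lemma therefore give \eqref{eq:calibrated-clt} with the covariance \eqref{eq:Vsc}.

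The main obstacle is the step that evaluates the uniform expansion at the random $\widehat\alpha$ and swaps $\widehat\alpha$ for $\alpha_{\rm sc}$. This step relies on the linearization in Theorem~\ref{thm:functional-clt} genuinely holding uniformly, not merely pointwise, together with equicontinuity of $\mathbb Z_n$ in $\alpha$. A secondary technical point is making the second-order control of the logarithm composition and parallel transport explicit. If this control is awkward, I would fix normal coordinates at $m_0$ throughout, so that the whole argument reduces to Euclidean calculus with smooth chart corrections.
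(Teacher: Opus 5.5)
Your proof is correct, but it takes a different route from the paper's.

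\textbf{The paper's route.} The paper does a single joint first-order expansion of the empirical estimating equation $G_{n,\widehat\alpha_{\rm sc}}(\widehat m_{n,\widehat\alpha_{\rm sc}})=0$ in $(m,\alpha)$ around $(m_0,\alpha_{\rm sc})$. This gives $0=G_{n,\alpha_{\rm sc}}(m_0)+A_0v_n+B_0(\widehat\alpha_{\rm sc}-\alpha_{\rm sc})+r_n$. The remainder is controlled by joint differentiability of $(\alpha,m)\mapsto G_\alpha(m)$ and stochastic equicontinuity, and the paper then solves for $v_n$. It never passes through the intermediate point $m_{\widehat\alpha}$.

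\textbf{Your route.} You go through the population path instead. The uniform linearization of Theorem~\ref{thm:functional-clt}, evaluated at the random $\widehat\alpha$, together with $\rho_P$-equicontinuity of $\mathbb Z_n$ in $\alpha$, gives the fixed-scale term. The delta method on the $C^1$ path of Theorem~\ref{thm:path-derivative} gives the correction $\dot m_{\alpha_{\rm sc}}\sqrt n(\widehat\alpha-\alpha_{\rm sc})=-A_0^{-1}B_0\sqrt n(\widehat\alpha-\alpha_{\rm sc})$.

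\textbf{What each buys.} Your version is modular and reuses established results. It makes the meaning of the extra term transparent: path velocity times scale fluctuation. It also supplies the $O_{\Pbb}(n^{-1/2})$ rates of both pieces directly. By contrast, the paper's claim $\sqrt n\|r_n\|=o_{\Pbb}(1)$ tacitly needs $\|v_n\|=O_{\Pbb}(n^{-1/2})$, which must be extracted by the usual rate step from the same equation. The price of your route is that you use the uniform-in-$\alpha$ linearization as a black box, where the paper only needs a local joint expansion at $(m_0,\alpha_{\rm sc})$. You also need the extra geometric estimate for composing logarithms through $m_{\widehat\alpha}$ and parallel transporting. The paper avoids that step by expanding directly in $T_{m_0}\M$. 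Your Hadamard-type bound of order $d(m_0,m_{\widehat\alpha})\,d(m_{\widehat\alpha},\widehat m)$ does handle it.

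\textbf{A citation slip.} $P(\widehat\alpha_{\rm sc}\in I_\eps)\to1$ is a hypothesis of Theorem~\ref{thm:calibration-consistency}, not a conclusion. Here it follows directly from Assumption~\ref{ass:alpha-if}, which gives $\widehat\alpha_{\rm sc}-\alpha_{\rm sc}=O_{\Pbb}(n^{-1/2})$. So $\widehat\alpha_{\rm sc}$ lies in the assumed neighborhood of $\alpha_{\rm sc}$ with probability tending to one.
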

The additional term $B_0\phi_\alpha(Z)$ shows how marginal scale uncertainty is
transmitted through the sensitivity of the product median to $\alpha$.

\subsection{Local robustness}

The same expansion gives the influence function of the calibrated median:
\begin{equation}\label{eq:calibrated-if}
 \operatorname{IF}_{\rm sc}(Z)
 =-A_0^{-1}\{\psi_0(Z)+B_0\phi_\alpha(Z)\}.
\end{equation}
Since $\|\psi_0(Z)\|$ is bounded by $1/\sqrt\eps$ on $I_\eps$, bounded influence follows if $\phi_\alpha$ is bounded and $A_0^{-1}$ is finite.  Radial median scales have bounded influence under positive density at the median and standard smoothness conditions; hence the calibrated product median preserves the local robustness of the fixed-$\alpha$ product median.  In implementation, we recommend truncating
\[
 \widehat\alpha_{\rm sc}\leftarrow
 \min\{2-\eps,\max\{\eps,\widehat\alpha_{\rm sc}\}\}
\]
for a small fixed $\eps>0$, which prevents finite-sample metric collapse.

\subsection{Dimension-adjusted calibration}
When the factors have very different dimensions, one may standardize dispersion
per tangent dimension rather than total marginal dispersion.  Let
\[
 d_M^*=\dim M,
 \qquad
 d_N^*=\dim N.
\]
The dimension-adjusted calibrated weight is
\begin{equation}\label{eq:dimension-adjusted-alpha}
 \widehat\alpha_{\rm dim}
 =
 2\frac{d_M^*/\widehat s_M^2}
 {d_M^*/\widehat s_M^2+d_N^*/\widehat s_N^2}.
\end{equation}
All consistency and two-step CLT results above apply with the obvious modification of the map $(s_M,s_N)\mapsto\alpha$.

\section{Approach III: a balanced estimating equation for $\alpha$}\label{sec:balanced}

Scale calibration chooses $\alpha$ by marginal standardization.  We now give a
direct data-adaptive alternative: add a bounded estimating equation that balances
the two factor contributions.  This is not joint minimization of
$F_\alpha(m)$ over $\alpha$, which is degenerate, but an augmented
location--scale estimating system.

\subsection{The balance function}

The balance equation compares the two scaled squared factor distances through a
bounded relative contrast.  For $m=(p,q)$, define
\[
 A_m(Z)=d_M(p,X)^2,
 \qquad
 B_m(Z)=d_N(q,Y)^2.
\]
For $\alpha\in(0,2)$, set
\begin{equation}\label{eq:h-alpha}
 h_\alpha(Z;m)
 =
 \frac{\alpha A_m(Z)-(2-\alpha)B_m(Z)}
 {\alpha A_m(Z)+(2-\alpha)B_m(Z)}.
\end{equation}
Whenever the denominator is nonzero,
\[
 -1\le h_\alpha(Z;m)\le1.
\]
The numerator is the difference between the scaled squared factor distances, and the denominator is their sum.  Thus $h_\alpha$ measures relative contribution imbalance on a bounded scale.

The balanced population target $(m_0,\alpha_0)$ is defined by
\begin{equation}\label{eq:balanced-pop-eq}
 G_{\alpha_0}(m_0)=0,
 \qquad
 H_{\alpha_0}(m_0):=E[h_{\alpha_0}(Z;m_0)]=0.
\end{equation}
The sample estimator solves
\begin{equation}\label{eq:balanced-sample-eq}
 G_{n,\widehat\alpha}(\widehat m)=0,
 \qquad
 H_{n,\widehat\alpha}(\widehat m):=\frac1n\sum_{i=1}^n h_{\widehat\alpha}(Z_i;\widehat m)=0.
\end{equation}
The equation selects the scale at which the average relative imbalance between
the two factors is zero.

\subsection{Monotonicity and uniqueness for fixed location}

For fixed location, the scale equation is one-dimensional and monotone, which
gives both identification and a simple numerical update.

\begin{lemma}[Monotonicity of the balance equation]\label{lem:h-monotone}
For fixed $m$ and $Z$, let $A=A_m(Z)$ and $B=B_m(Z)$.  If $A+B>0$, then
\begin{equation}\label{eq:h-derivative}
 \frac{\partial}{\partial\alpha}
 \frac{\alpha A-(2-\alpha)B}{\alpha A+(2-\alpha)B}
 =
 \frac{4AB}{\{\alpha A+(2-\alpha)B\}^2}\ge0.
\end{equation}
Thus $\alpha\mapsto h_\alpha(Z;m)$ is nondecreasing, and it is strictly increasing whenever $A>0$ and $B>0$.
\end{lemma}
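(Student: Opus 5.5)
The plan is a direct quotient-rule computation. The denominator is $D(\alpha)=\alpha A+(2-\alpha)B$, and the first thing to establish is that it is strictly positive on $(0,2)$. Since $A,B\ge0$ and $A+B>0$, $D(\alpha)$ is a convex combination of $2A$ and $2B$ with weights $\alpha/2$ and $(2-\alpha)/2$. Both weights are strictly positive, so $D(\alpha)>0$ whenever at least one of $A,B$ is positive. Hence $h_\alpha$ is smooth in $\alpha$ on the open interval.

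Next I write the numerator as $N(\alpha)=\alpha A-(2-\alpha)B$. Its derivative is $N'(\alpha)=A+B$, and the denominator's derivative is $D'(\alpha)=A-B$. The quotient rule then gives
\[
\frac{\partial}{\partial\alpha}\frac{N}{D}=\frac{(A+B)D-(A-B)N}{D^2}.
\]
Expanding the numerator, the $\alpha$-dependent terms cancel:
\[
(A+B)\{\alpha A+(2-\alpha)B\}-(A-B)\{\alpha A-(2-\alpha)B\}.
\]
Collecting $\alpha A^2$ gives zero. The cross terms $AB$ combine to $\alpha AB+(2-\alpha)AB+\alpha AB+(2-\alpha)AB=4AB$. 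Collecting $B^2$ gives $(2-\alpha)B^2-(2-\alpha)B^2=0$. This yields exactly \eqref{eq:h-derivative}.

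Nonnegativity follows at once from $A,B\ge0$ and $D^2>0$. Strict monotonicity when $A>0$ and $B>0$ follows because the derivative is then strictly positive on all of $(0,2)$, and a function with strictly positive derivative on an interval is strictly increasing by the mean value theorem. If exactly one of $A,B$ vanishes, $h_\alpha$ is constant at $\pm1$, which is consistent with the weak statement.

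There is no real obstacle here. The only care needed is the positivity of the denominator on the open interval, which is what justifies differentiating, and the bookkeeping in the algebraic cancellation.
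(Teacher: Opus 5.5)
Your proposal is correct and follows the same route as the paper: a direct quotient-rule computation with numerator derivative $A+B$ and denominator derivative $A-B$, giving $4AB/\{\alpha A+(2-\alpha)B\}^2$. Your check that the denominator is positive on $(0,2)$ (using $A,B\ge0$, $A+B>0$) and your term-by-term cancellation fill in details the paper leaves implicit.
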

Increasing $\alpha$ increases the relative contribution of the $M$-factor and
decreases that of the $N$-factor, so the imbalance can only increase. The endpoint limits then force a unique interior zero under mild
nondegeneracy.
\begin{theorem}[Unique balancing scale for fixed location]\label{thm:unique-balance-fixed-m}
Fix $m=(p,q)$.  Assume
\[
 P(A_m(Z)>0)=1,
 \qquad
 P(B_m(Z)>0)=1,
 \qquad
 P(A_m(Z)B_m(Z)>0)>0.
\]
Then $\alpha\mapsto H_\alpha(m)$ is continuous and strictly increasing on $(0,2)$, with
\[
 \lim_{\alpha\downarrow0}H_\alpha(m)=-1,
 \qquad
 \lim_{\alpha\uparrow2}H_\alpha(m)=1.
\]
Consequently, there exists a unique $\alpha(m)\in(0,2)$ satisfying
\[
 H_{\alpha(m)}(m)=0.
\]
\end{theorem}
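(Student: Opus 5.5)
The argument works pointwise in $Z$ and then passes to the expectation $H_\alpha(m)=E[h_\alpha(Z;m)]$ using the uniform bound $|h_\alpha|\le1$. On the almost-sure event $\{A_m(Z)>0,\,B_m(Z)>0\}$, the denominator $\alpha A+(2-\alpha)B$ is strictly positive for every $\alpha\in[0,2]$. So $h_\alpha(Z;m)$ is well defined and continuous in $\alpha$ on the closed interval $[0,2]$, with endpoint values $h_0=-B/B=-1$ and $h_2=2A/2A=1$. We will handle three properties in turn.

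\emph{Continuity.} For $\alpha_k\to\alpha\in(0,2)$ we have $h_{\alpha_k}(Z;m)\to h_\alpha(Z;m)$ almost surely. The constant $1$ is an integrable envelope, so dominated convergence gives $H_{\alpha_k}(m)\to H_\alpha(m)$.

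\emph{Endpoint limits.} Apply the same dominated convergence argument with $\alpha\downarrow0$ and $\alpha\uparrow2$. Pointwise $h_\alpha\to-1$ and $h_\alpha\to1$ respectively, so $H_\alpha(m)\to-1$ and $H_\alpha(m)\to1$.

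\emph{Strict monotonicity.} Fix $0<\alpha<\beta<2$. By Lemma~\ref{lem:h-monotone} and the fundamental theorem of calculus,
\[
 h_\beta(Z;m)-h_\alpha(Z;m)=\int_\alpha^\beta\frac{4AB}{\{tA+(2-t)B\}^2}\,dt ,
\]
which is nonnegative and is strictly positive whenever $AB>0$. Under the stated assumptions $AB>0$ actually holds almost surely, and the weaker hypothesis $P(AB>0)>0$ suffices anyway. Then $E[h_\beta-h_\alpha]>0$, because a nonnegative random variable that is strictly positive on a set of positive probability has positive expectation. The expectations exist since the variables are bounded. Hence $H_\beta(m)>H_\alpha(m)$.

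\emph{Conclusion.} $\alpha\mapsto H_\alpha(m)$ is continuous and strictly increasing on $(0,2)$ with limits $-1$ and $1$ at the ends. Choose $\alpha_1$ close to $0$ with $H_{\alpha_1}(m)<0$ and $\alpha_2$ close to $2$ with $H_{\alpha_2}(m)>0$. The intermediate value theorem on $[\alpha_1,\alpha_2]$ gives a zero $\alpha(m)$, and strict monotonicity makes it unique.

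The only delicate points are justifying the interchange of limit and expectation, which the bound $|h_\alpha|\le1$ settles, and confirming that the denominator never vanishes on $(0,2)$, which the almost-sure positivity of $A$ and $B$ settles. I expect no real obstacle.
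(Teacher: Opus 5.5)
Your proof is correct and follows the paper's argument step for step. It uses dominated convergence with the envelope $|h_\alpha|\le1$ for continuity and the endpoint limits, Lemma~\ref{lem:h-monotone} with $P(AB>0)>0$ for strict monotonicity, and the intermediate value theorem plus strict monotonicity for existence and uniqueness; your integral form of $h_\beta-h_\alpha$ and your remark that $AB>0$ already holds almost surely are small clarifications the paper leaves implicit.
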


This is the constructive counterpart to the no-go theorem: minimization over
scale fails, but a separate monotone moment equation identifies an interior
scale.

\subsection{Joint consistency and asymptotic normality}

We now treat the median equation and the balance equation as a single
$Z$-estimation problem.  Let $\theta=(m,\alpha)$ and define the augmented
estimating function
\begin{equation}\label{eq:Xi}
 \Xi_\alpha(Z;m)
 =
 \begin{pmatrix}
 \psi_\alpha(Z;m)\\
 h_\alpha(Z;m)
 \end{pmatrix}.
\end{equation}
The first component is tangent-space valued and the second is scalar.  In local coordinates around $m_0$, $\Xi_\alpha(Z;m)$ is a vector in $\R^{d+1}$, where $d=\dim M+\dim N$.

The following assumption collects the usual uniqueness, smoothness, and
nonsingularity conditions for this augmented system.
\begin{assumption}[Balanced target regularity]\label{ass:balanced}
There exists a unique pair $(m_0,\alpha_0)\in K\times I_\eps$ satisfying
\[
 E[\Xi_{\alpha_0}(Z;m_0)]=0.
\]
The map $(m,\alpha)\mapsto E[\Xi_\alpha(Z;m)]$ is continuously differentiable in a neighborhood of $(m_0,\alpha_0)$.  Its Jacobian
\begin{equation}\label{eq:J-balanced}
 J=
 D_{(m,\alpha)}E[\Xi_\alpha(Z;m)]\big|_{(m_0,\alpha_0)}
\end{equation}
is nonsingular.  The class of functions
\[
 \{\Xi_\alpha(\cdot;m):(m,\alpha)\in U\}
\]
is $P$-Donsker for some neighborhood $U$ of $(m_0,\alpha_0)$.
\end{assumption}

Under these conditions, the estimator has the standard asymptotic linear
representation for regular estimating equations. This is a direct application of the usual local theory for regular
$Z$-estimators \citep{vandervaart_1996_WeakConvergenceEmpirical, vandervaart_1998_AsymptoticStatistics}.

\begin{theorem}[Balanced estimator: asymptotic linearity and CLT]\label{thm:balanced-clt}
Suppose Assumption \ref{ass:balanced} holds and let $(\widehat m,\widehat\alpha)$ be a measurable approximate solution of \eqref{eq:balanced-sample-eq} satisfying
\[
 \left\|P_n\Xi_{\widehat\alpha}(\cdot;\widehat m)\right\|=o_{\Pbb}(n^{-1/2})
\]
and $(\widehat m,\widehat\alpha)\to(m_0,\alpha_0)$ in probability.  Then
\begin{equation}\label{eq:balanced-linearization}
 \sqrt n
 \begin{pmatrix}
 \log_{m_0}\widehat m\\
 \widehat\alpha-\alpha_0
 \end{pmatrix}
 =
 -J^{-1}\frac1{\sqrt n}\sum_{i=1}^n\Xi_{\alpha_0}(Z_i;m_0)+o_{\Pbb}(1).
\end{equation}
Consequently,
\begin{equation}\label{eq:balanced-normal}
 \sqrt n
 \begin{pmatrix}
 \log_{m_0}\widehat m\\
 \widehat\alpha-\alpha_0
 \end{pmatrix}
 \Rightarrow
 N\left(0,
 J^{-1}\Var\{\Xi_{\alpha_0}(Z;m_0)\}J^{-T}
 \right).
\end{equation}
\end{theorem}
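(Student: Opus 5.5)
The plan is to run the standard Z-estimator argument (van der Vaart, Theorem 3.3.1 of van der Vaart--Wellner, or Theorem 5.21 of van der Vaart 1998) in normal coordinates around $m_0$. The augmented system is then a finite-dimensional estimating equation in $\theta=(m,\alpha)\in\R^{d+1}$.

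\textbf{Step 1: coordinates.} Fix the chart $m\mapsto\log_{m_0}m$ on a neighborhood of $m_0$; it is smooth by Assumption~\ref{ass:geometry}. Write $\theta=(\log_{m_0}m,\alpha)$ and $\theta_0=(0,\alpha_0)$. Represent $\psi_\alpha(Z;m)$ in $T_{m_0}\M$ by parallel transport along the unique geodesic from $m$ to $m_0$. This turns $\Xi$ into an $\R^{d+1}$-valued function $\xi_\theta(Z)$. Transport is an isometry that depends smoothly on $m$ and equals the identity at $m_0$, so two things follow. First, zeros of $P_n\Xi$ and $P\Xi$ are unchanged. Second, the Jacobian of $\Psi(\theta):=P\xi_\theta$ at $\theta_0$ equals $J$. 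The extra terms from differentiating the transport are multiplied by $P\Xi_{\alpha_0}(\cdot;m_0)=0$.

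\textbf{Step 2: stochastic equicontinuity.} Assumption~\ref{ass:balanced} says $\{\xi_\theta:\theta\in U\}$ is $P$-Donsker. The components are bounded ($|h|\le1$ and $\|\psi\|\le\eps^{-1/2}$ on $I_\eps$), and Assumption~\ref{ass:no-singular} gives continuity of $\theta\mapsto\xi_\theta(Z)$ off a null set. Dominated convergence then yields $P\|\xi_\theta-\xi_{\theta_0}\|^2\to0$ as $\theta\to\theta_0$. Combined with consistency of $\widehat\theta$, asymptotic equicontinuity of the empirical process $\mathbb G_n=\sqrt n(P_n-P)$ gives
\[
 \mathbb G_n(\xi_{\widehat\theta}-\xi_{\theta_0})=o_{\Pbb}(1).
\]

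\textbf{Step 3: linearize and solve.} Expand as
\[
 o_{\Pbb}(1)=\sqrt n P_n\xi_{\widehat\theta}
 =\mathbb G_n\xi_{\theta_0}+\sqrt n\{\Psi(\widehat\theta)-\Psi(\theta_0)\}+o_{\Pbb}(1),
\]
using $\Psi(\theta_0)=0$. Differentiability gives $\Psi(\widehat\theta)-\Psi(\theta_0)=J(\widehat\theta-\theta_0)+o(\|\widehat\theta-\theta_0\|)$. Nonsingularity of $J$ then yields
\[
 \sqrt n\|\widehat\theta-\theta_0\|\{\sigma_{\min}(J)-o_{\Pbb}(1)\}\le O_{\Pbb}(1),
\]
so $\widehat\theta$ is $\sqrt n$-consistent. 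Substituting back gives \eqref{eq:balanced-linearization}. The normal limit \eqref{eq:balanced-normal} then follows from the multivariate CLT applied to the bounded i.i.d. vectors $\Xi_{\alpha_0}(Z_i;m_0)$, which have mean zero, together with Slutsky's lemma.

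\textbf{Main obstacle.} The hardest step is Step 2, the $L^2$-continuity of the score at $\theta_0$ needed for equicontinuity. The function $\psi_\alpha$ is discontinuous at $Z=m$, and $h_\alpha$ is undefined when $A_m=B_m=0$. My first attempt would be to control both by $P(d_1(Z,m_0)<\delta)\to0$, which follows from Assumption~\ref{ass:no-singular}, and use boundedness elsewhere. Because both scores are uniformly bounded, only convergence in probability of the differences is needed.
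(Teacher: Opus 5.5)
Your proposal is correct and follows the same route as the paper's proof. Both linearize $P_n\Xi$ in normal coordinates at $m_0$ using differentiability of the population map plus stochastic equicontinuity, invert $J$, and apply the multivariate CLT. Your version is more careful than the paper's one-line argument: you explicitly derive $\sqrt n$-consistency before absorbing the $o(\|\widehat\theta-\theta_0\|)$ Taylor remainder, and you justify equicontinuity through $L^2$-continuity of the bounded scores. That $L^2$ step uses a no-atom/cut-locus condition at $m_0$, which is not literally among the theorem's hypotheses but is implicit in the definedness and differentiability required by Assumption~\ref{ass:balanced}.
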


The theorem assumes consistency to focus on the asymptotic linearization.  Consistency follows from the standard $Z$-estimation argmin/zero theorem if $E[\Xi_\alpha(Z;m)]$ has a unique zero, the empirical process converges uniformly, and the approximate zeros are tight in a compact neighborhood.  These conditions are implied by Assumption \ref{ass:balanced} plus the compact local geometry imposed in Assumption \ref{ass:geometry}.

\subsection{Bounded influence}

The influence function follows directly from the linear representation:
\begin{equation}\label{eq:balanced-if}
 \operatorname{IF}_{\rm bal}(Z)
 =-J^{-1}\Xi_{\alpha_0}(Z;m_0).
\end{equation}
On $I_\eps$,
\[
 \|\psi_\alpha(Z;m)\|_{g_1}
 =
 \frac{\{A_m(Z)+B_m(Z)\}^{1/2}}
 {\{\alpha A_m(Z)+(2-\alpha)B_m(Z)\}^{1/2}}
 \le \frac1{\sqrt\eps},
\]
and
\[
 |h_\alpha(Z;m)|\le1.
\]
Thus $\Xi_{\alpha_0}(Z;m_0)$ is bounded.  If $J^{-1}$ is finite, then the balanced estimator has bounded influence.

The bounded contrast $h_\alpha$ is therefore preferable to an unnormalized
difference of squared distances, which would be more sensitive to extreme
observations.

\subsection{Dimension-adjusted balance}

As in scale calibration, the balance equation can be adjusted to account for
different factor dimensions.  For factors of unequal dimensions $d_M^*=\dim M$ and $d_N^*=\dim N$, a dimension-adjusted balance function is
\begin{equation}\label{eq:h-dim}
 h_\alpha^{\rm dim}(Z;m)
 =
 \frac{
 \alpha d_M(p,X)^2/d_M^*-(2-\alpha)d_N(q,Y)^2/d_N^*
 }
 {
 \alpha d_M(p,X)^2/d_M^*+(2-\alpha)d_N(q,Y)^2/d_N^*
 }.
\end{equation}
All monotonicity, uniqueness, consistency, CLT, and bounded-influence results carry over after replacing $A_m$ by $d_M(p,X)^2/d_M^*$ and $B_m$ by $d_N(q,Y)^2/d_N^*$.

The three approaches serve different purposes.  The path quantifies scale
sensitivity, calibration gives a unit-invariant default, and balancing provides
direct data-adaptive scale estimation.

\section{Numerical experiments}\label{sec:experiments}

We now illustrate the preceding theory in four simulation studies.  The first
experiment visualizes the boundary degeneracy of naive joint minimization and
the sensitivity path $\alpha\mapsto \widehat m_{n,\alpha}$.  The second
experiment studies the scale-calibrated estimator and its invariance to changes
of units.  The third experiment examines the balanced estimating equation.  The
fourth experiment applies the proposed procedures to Gaussian distributions
under the Bures--Wasserstein product geometry.

All computations were performed in Python.  Fixed-$\alpha$ product medians were
computed by the product Weiszfeld algorithm \citep{park_2026_GeometricMediansProduct}. 
For the balanced estimator, we alternated between a fixed-$\alpha$ median update
and a one-dimensional bisection step for the balance equation.  Unless otherwise
stated, the scale grid was contained in $[0.05,1.95]$.

\subsection{Boundary degeneracy and sensitivity path}

The first experiment uses the Euclidean product
\[
 M=\mathbb R^2,\qquad N=\mathbb R^2,
\]
with
\[
 d_\alpha((p,q),(x,y))^2
 =
 \alpha\|p-x\|^2+(2-\alpha)\|q-y\|^2 .
\]
The data are generated from a three-component mixture.  With probabilities
$0.45$, $0.30$, and $0.25$, respectively, observations are sampled from a
central component, a component shifted only in the $M$-factor, and a component
shifted only in the $N$-factor:
\[
 X=0.1\,\varepsilon_X,\qquad Y=0.1\,\varepsilon_Y
\]
for the central component,
\[
 X=(4,0)+0.1\,\varepsilon_X,\qquad Y=0.1\,\varepsilon_Y
\]
for the $M$-shifted component, and
\[
 X=0.1\,\varepsilon_X,\qquad Y=(3,0)+0.1\,\varepsilon_Y
\]
for the $N$-shifted component, where
$\varepsilon_X,\varepsilon_Y\sim N_2(0,I_2)$.  This design creates a simple
setting in which the two factors contain different geometric deviations.

For each sample, we computed the empirical path
$\alpha\mapsto \widehat m_{n,\alpha}$ and the profiled objective
\[
 \widehat\varphi_n(\alpha)
 =
 \min_m F_{n,\alpha}(m).
\]
Figure~\ref{fig:exp1-boundary-sensitivity} summarizes the representative path
and endpoint-selection behavior.  The profiled objective is minimized at an
endpoint of the scale grid, as predicted by Theorem~\ref{thm:boundary-degeneracy},
while the median path itself varies smoothly with $\alpha$.

\begin{figure}[ht]
\centering
\includegraphics[width=.95\textwidth]{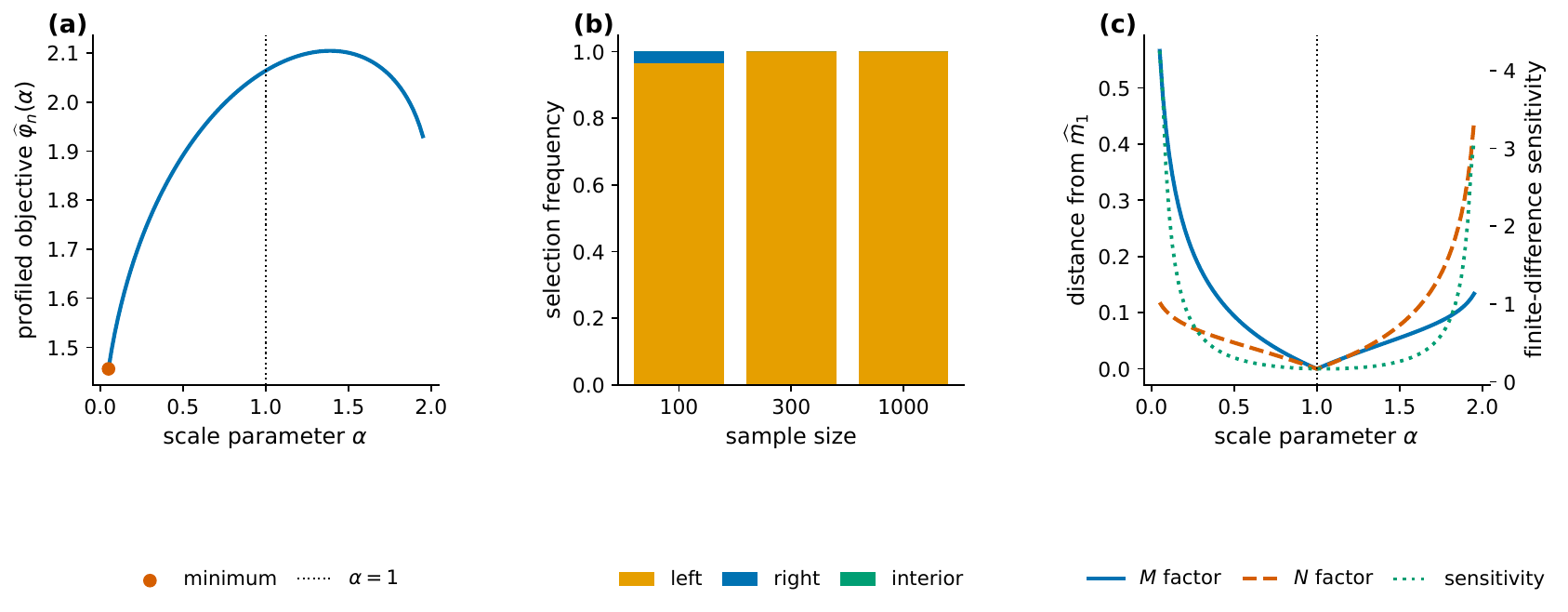}
\caption{
Boundary degeneracy and sensitivity path in the Euclidean product example.
(a) Profiled objective $\widehat\varphi_n(\alpha)$ for a representative sample,
with the minimum marked and $\alpha=1$ shown by the vertical reference line.
(b) Selection frequencies of the left endpoint, right endpoint, and interior
grid points under naive minimization over $\alpha$.
(c) Factorwise displacement from $\widehat m_{n,1}$ and finite-difference
sensitivity along the median path.
}
\label{fig:exp1-boundary-sensitivity}
\end{figure}

Table~\ref{tab:exp1-summary} summarizes the Monte Carlo study.  Across
$n=100,300,1000$, the profiled objective is smallest at the left endpoint on
average.  For $n=100$, the left endpoint is selected in $96.6\%$ of repetitions
and the right endpoint in $3.4\%$; for $n=300$ and $n=1000$, the left endpoint
is selected in all repetitions.  No repetition selects an interior value.  The
experiment therefore illustrates two distinct facts: the median path itself is
meaningful, but minimizing over the path is not a valid way to learn scale.

\begin{table}[t]
\centering
\small
\caption{
Experiment 1 summary.  The first three objective columns report Monte Carlo
averages of the profiled objective at the left endpoint, right endpoint, and
$\alpha=1$.  The last three columns report endpoint selection frequencies for
naive minimization over the scale grid.  The column ``conv.'' gives the
algorithmic convergence rate.
}
\label{tab:exp1-summary}
\begin{tabular}{rrrrrrrr}
\toprule
$n$
& $\bar\varphi(0.05)$
& $\bar\varphi(1.95)$
& $\bar\varphi(1)$
& conv.
& left
& right
& interior \\
\midrule
100  & 1.3756 & 1.9039 & 1.9981 & 1.000 & 0.966 & 0.034 & 0.000 \\
300  & 1.3689 & 1.9062 & 1.9976 & 1.000 & 1.000 & 0.000 & 0.000 \\
1000 & 1.3713 & 1.8987 & 1.9951 & 1.000 & 1.000 & 0.000 & 0.000 \\
\bottomrule
\end{tabular}
\end{table}

\subsection{Robust scale calibration and unit invariance}

The second experiment studies the effect of changing measurement units in one
factor.  We use the same Euclidean product model as in the first experiment,
with $n=300$ and $500$ Monte Carlo replications.  The $M$-factor metric is
rescaled by
\[
 d_M^{(c)}=c\,d_M,
 \qquad
 c\in\{0.1,0.25,0.5,1,2,4,10\}.
\]
For each value of $c$, we compute three estimators: the fixed-scale product
median with $\alpha=1$, the robust scale-calibrated median, and the balanced
estimator.

The drift from the reference estimate at $c=1$ is measured by
\[
 \Delta(c)
 =
 \left\{
 \|\widehat p^{(c)}-\widehat p^{(1)}\|^2
 +
 \|\widehat q^{(c)}-\widehat q^{(1)}\|^2
 \right\}^{1/2}.
\]
Figure~\ref{fig:exp2-scale-calibration} shows that the fixed $\alpha=1$ median
changes substantially under unit rescaling, while the scale-calibrated and
balanced estimators remain invariant up to numerical precision.  The selected
values of $\widehat\alpha$ vary with $c$, but their effective original-coordinate
weight ratios remain stable for the adaptive methods.  This confirms the scale
equivariance result in Theorem~\ref{thm:scale-equivariance}.

\begin{figure}[t]
\centering
\includegraphics[width=.95\textwidth]{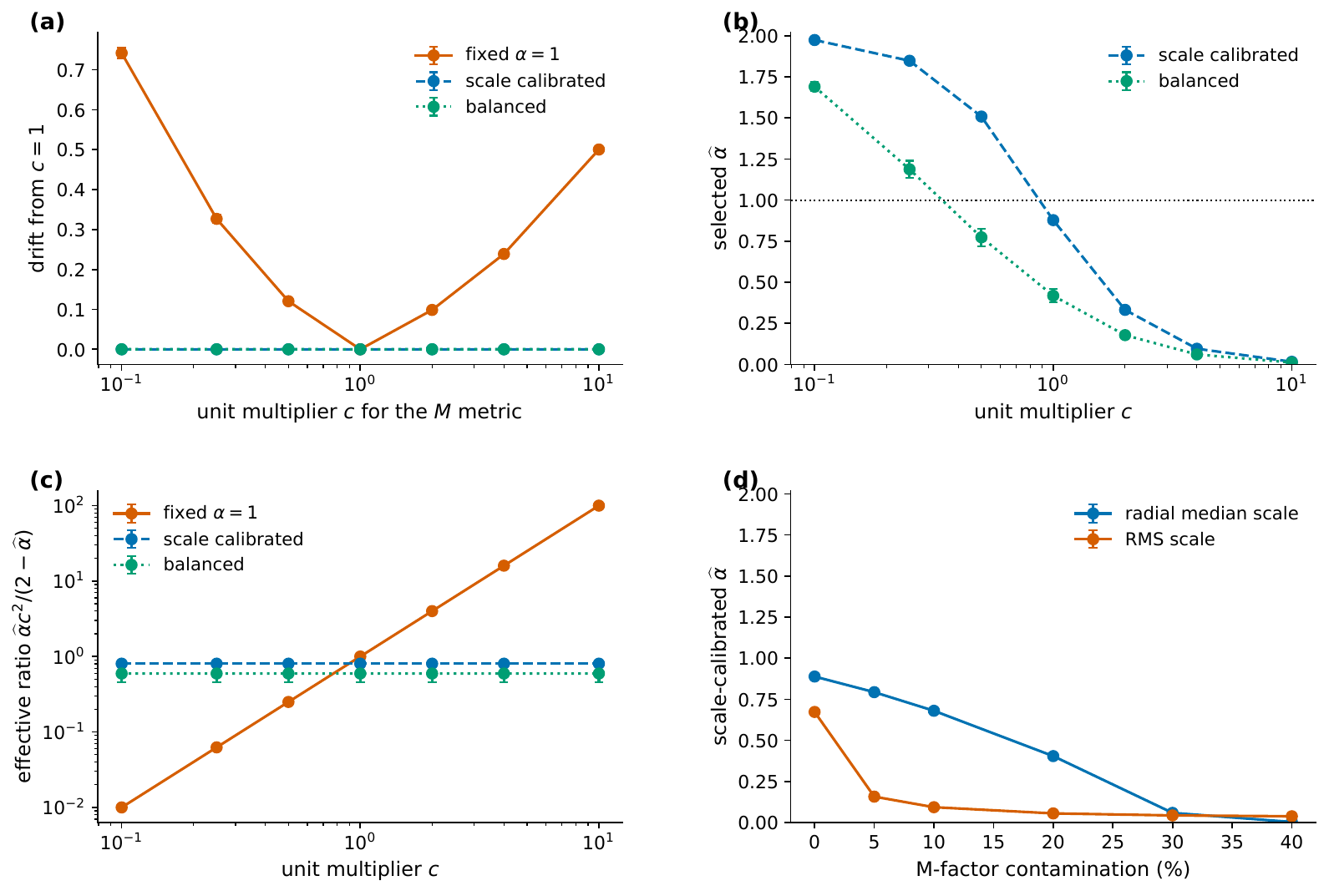}
\caption{
Scale calibration and unit invariance.
(a) Location drift from the estimate at $c=1$ after multiplying the $M$-factor
metric by $c$.
(b) Selected scale values for the calibrated and balanced procedures.
(c) Effective original-coordinate weight ratio
$\widehat\alpha c^2/(2-\widehat\alpha)$ under unit rescaling.
(d) Scale-calibrated $\widehat\alpha$ under $M$-factor contamination, comparing
radial-median and RMS scale estimates.
}
\label{fig:exp2-scale-calibration}
\end{figure}

Table~\ref{tab:exp2-unit} gives the corresponding numerical summaries.  For the
fixed $\alpha=1$ median, the drift is $0.7419$ at $c=0.1$ and $0.5003$ at
$c=10$.  In contrast, both adaptive estimators have zero drift to the displayed
precision.  Their selected values of $\alpha$ compensate for the metric
rescaling: as $c$ increases, the reported $\widehat\alpha$ decreases.

\begin{table}[t]
\centering
\small
\caption{
Experiment 2 unit-rescaling summary.  Drifts are measured relative to the
corresponding estimate at $c=1$.
}
\label{tab:exp2-unit}
\begin{tabular}{rrrrrr}
\toprule
$c$
& fixed drift
& calibrated drift
& balanced drift
& $\widehat\alpha_{\rm sc}$
& $\widehat\alpha_{\rm bal}$ \\
\midrule
0.10 & 0.7419 & 0.0000 & 0.0000 & 1.9738 & 1.6898 \\
0.25 & 0.3267 & 0.0000 & 0.0000 & 1.8477 & 1.1871 \\
0.50 & 0.1204 & 0.0000 & 0.0000 & 1.5082 & 0.7733 \\
1.00 & 0.0000 & 0.0000 & 0.0000 & 0.8779 & 0.4179 \\
2.00 & 0.0986 & 0.0000 & 0.0000 & 0.3319 & 0.1780 \\
4.00 & 0.2387 & 0.0000 & 0.0000 & 0.0954 & 0.0605 \\
10.00 & 0.5003 & 0.0000 & 0.0000 & 0.0159 & 0.0114 \\
\bottomrule
\end{tabular}
\end{table}

We also compare robust radial-median scale calibration with RMS-based scale
calibration under contamination in the $M$-factor.  A fraction $\eta$ of the
sample is replaced by observations with
\[
 X^{\rm out}=(20,0)+\varepsilon,
 \qquad
 Y^{\rm out}=Y,
 \qquad
 \varepsilon\sim N_2(0,I_2).
\]
Table~\ref{tab:exp2-contamination} shows that the RMS scale reacts sharply even
to $5\%$ contamination, driving the calibrated $\widehat\alpha$ from $0.6727$ to
$0.1574$.  The radial-median scale is more stable for small and moderate
contamination, although it also deteriorates as contamination approaches the
breakdown regime.  This supports the use of robust marginal scales in the
calibration step.

\begin{table}[t]
\centering
\small
\caption{
Experiment 2 contamination summary.  The table reports the scale-calibrated
$\widehat\alpha$ obtained from robust radial-median scales and from RMS scales.
}
\label{tab:exp2-contamination}
\begin{tabular}{rrr}
\toprule
contamination $\eta$
& radial-median scale
& RMS scale \\
\midrule
0.00 & 0.8882 & 0.6727 \\
0.05 & 0.7932 & 0.1574 \\
0.10 & 0.6798 & 0.0931 \\
0.20 & 0.4040 & 0.0553 \\
0.30 & 0.0585 & 0.0431 \\
0.40 & 0.0031 & 0.0379 \\
\bottomrule
\end{tabular}
\end{table}

\subsection{Balanced estimating equation}

The third experiment evaluates the balanced estimator from
Section~\ref{sec:balanced}.  We again use the Euclidean product mixture from
Experiment 1.  The population target $(m_0,\alpha_0)$ is approximated by a large
Monte Carlo sample of size $200000$.  We then generate samples of size
\[
 n\in\{100,200,500,1000\}
\]
with $1000$ replications for each $n$.  For each sample, the balanced estimator
is computed by alternating fixed-scale median updates and bisection for the
balance equation.

Figure~\ref{fig:exp3-balanced} gives diagnostics for the balanced estimator.
The empirical balance function is monotone and crosses zero at the selected
scale, while the Monte Carlo diagnostics show the finite-sample behavior of the
scale and location components.

\begin{figure}[t]
\centering
\includegraphics[width=.95\textwidth]{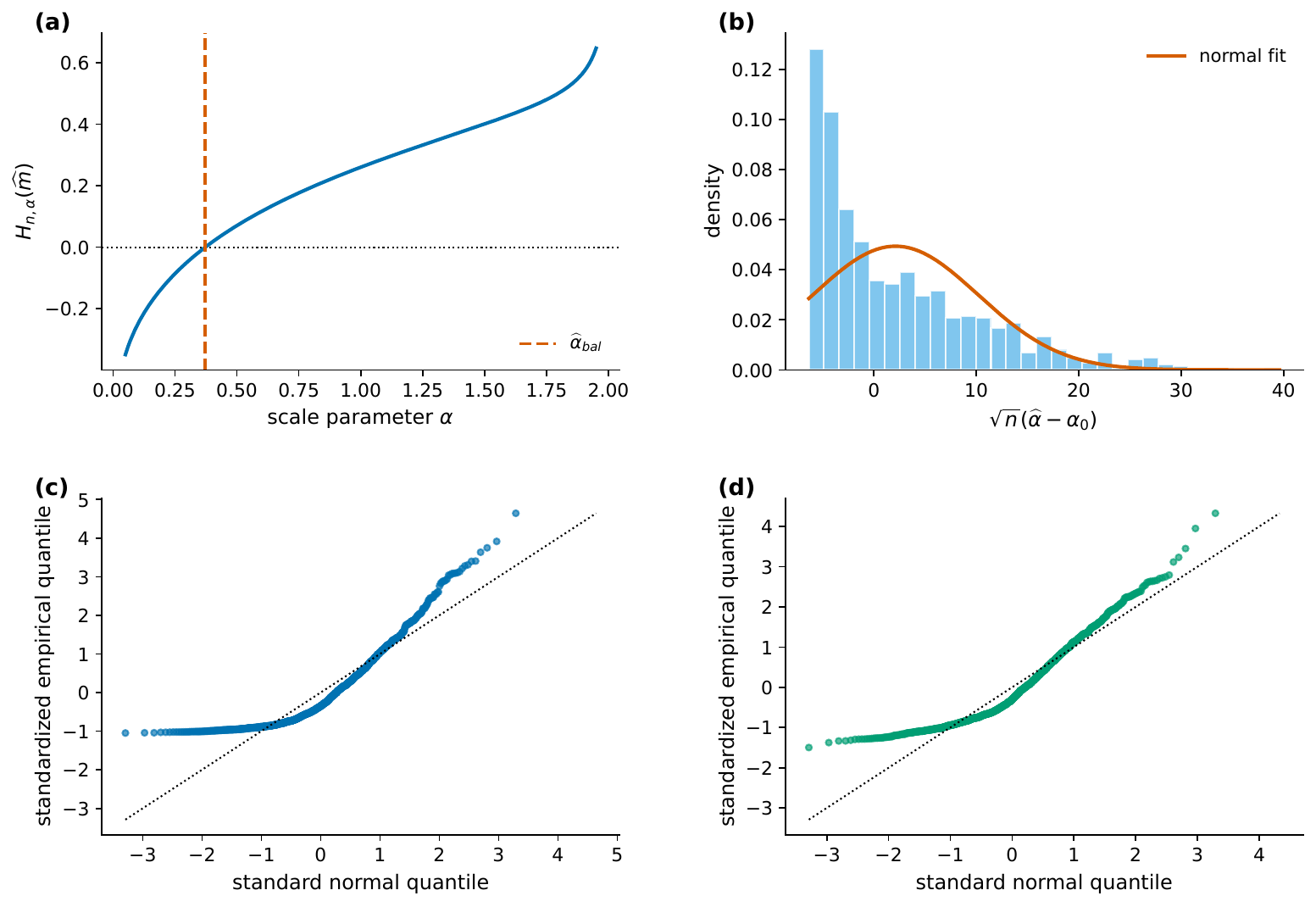}
\caption{
Balanced estimating equation.
(a) Empirical balance function for a representative sample, with the selected
$\widehat\alpha_{\rm bal}$ marked.
(b) Monte Carlo distribution of
$\sqrt n(\widehat\alpha-\alpha_0)$ for $n=1000$, with a fitted normal density.
(c) Normal Q--Q plot for the standardized scale estimator.
(d) Normal Q--Q plot for a standardized scalar projection of the location
estimator.
}
\label{fig:exp3-balanced}
\end{figure}

Table~\ref{tab:exp3-balanced} reports the Monte Carlo summaries.  The bias and
RMSE of $\widehat\alpha$ decrease with $n$.  The quantity
$\sqrt n\,\mathrm{sd}(\widehat\alpha)$ is reasonably stable, increasing from
$6.6765$ at $n=100$ to $8.0733$ at $n=1000$, which is consistent with
$n^{-1/2}$ scaling.  The empirical convergence rate of the algorithm is above
$96\%$ for all sample sizes and reaches $99.2\%$ at $n=1000$.  

The Wald intervals based on the sandwich standard error are not yet close to
nominal coverage in this design, with coverage increasing from $0.572$ at
$n=100$ to $0.752$ at $n=1000$.  This is in line with the skewness seen in
Figure~\ref{fig:exp3-balanced}.  In applications, bootstrap intervals may
therefore be preferable for the scale component when sample sizes are moderate.

\begin{table}[t]
\centering
\small
\caption{
Experiment 3 balanced-estimator asymptotics.  The target $\alpha_0$ is
approximated by a large Monte Carlo sample.  The column ``conv.'' gives the
algorithmic convergence rate.
}
\label{tab:exp3-balanced}
\begin{tabular}{rrrrrrrr}
\toprule
$n$
& bias
& sd
& $\sqrt n\,\mathrm{sd}$
& RMSE
& mean SE
& coverage
& conv. \\
\midrule
100  & 0.3879 & 0.6676 & 6.6765 & 0.7719 & 0.5427 & 0.572 & 0.967 \\
200  & 0.2307 & 0.5531 & 7.8221 & 0.5990 & 0.4365 & 0.599 & 0.964 \\
500  & 0.1251 & 0.3557 & 7.9532 & 0.3769 & 0.3297 & 0.708 & 0.971 \\
1000 & 0.0668 & 0.2553 & 8.0733 & 0.2638 & 0.2445 & 0.752 & 0.992 \\
\bottomrule
\end{tabular}
\end{table}

\subsection{Bures--Wasserstein Gaussian product example}

The final experiment studies a non-Euclidean product arising from Gaussian
distributions.  A Gaussian law $N_d(\mu,\Sigma)$ is represented by its mean
$\mu\in\mathbb R^d$ and covariance matrix $\Sigma\in\mathcal S_{++}^d$.  The
squared 2-Wasserstein distance between Gaussian distributions decomposes as
\[
 W_2^2\{N_d(\mu_1,\Sigma_1),N_d(\mu_2,\Sigma_2)\}
 =
 \|\mu_1-\mu_2\|^2
 +
 d_{\rm BW}(\Sigma_1,\Sigma_2)^2,
\]
where
\[
 d_{\rm BW}(\Sigma_1,\Sigma_2)^2
 =
 \operatorname{tr}\{\Sigma_1+\Sigma_2
 -2(\Sigma_2^{1/2}\Sigma_1\Sigma_2^{1/2})^{1/2}\}.
\]
This is the Gaussian optimal transport distance and is closely related to the
Bures--Wasserstein geometry of positive-definite matrices \citep{dowson_1982_FrechetDistanceMultivariate, takatsu_2011_WassersteinGeometryGaussian}. We therefore use the scaled product distance
\[
 d_\alpha^2
 =
 \alpha\|\mu-\mu_i\|^2
 +(2-\alpha)d_{\rm BW}(\Sigma,\Sigma_i)^2 .
\]

The data are generated from a three-component mixture.  The central component
has mean near zero and covariance near $I_d$.  The mean-shift component has mean
shift $\delta e_1$ with $\delta=3$ and covariance near $I_d$.  The covariance
shift component has mean near zero and covariance near the AR(1) matrix
\[
 \Sigma_{\rm AR}(\rho)_{jk}=\rho^{|j-k|},
 \qquad \rho=0.7 .
\]
Small random perturbations are added to both means and covariances.  The mixture
weights are $0.50$, $0.25$, and $0.25$.

Figure~\ref{fig:exp4-bw} shows one visualization with $d=2$ and $n=250$.
The scale-calibrated and balanced choices are close in this representative
dataset, with selected values approximately $\widehat\alpha_{\rm sc}=0.47$ and
$\widehat\alpha_{\rm bal}=0.49$.

\begin{figure}[t]
\centering
\includegraphics[width=.95\textwidth]{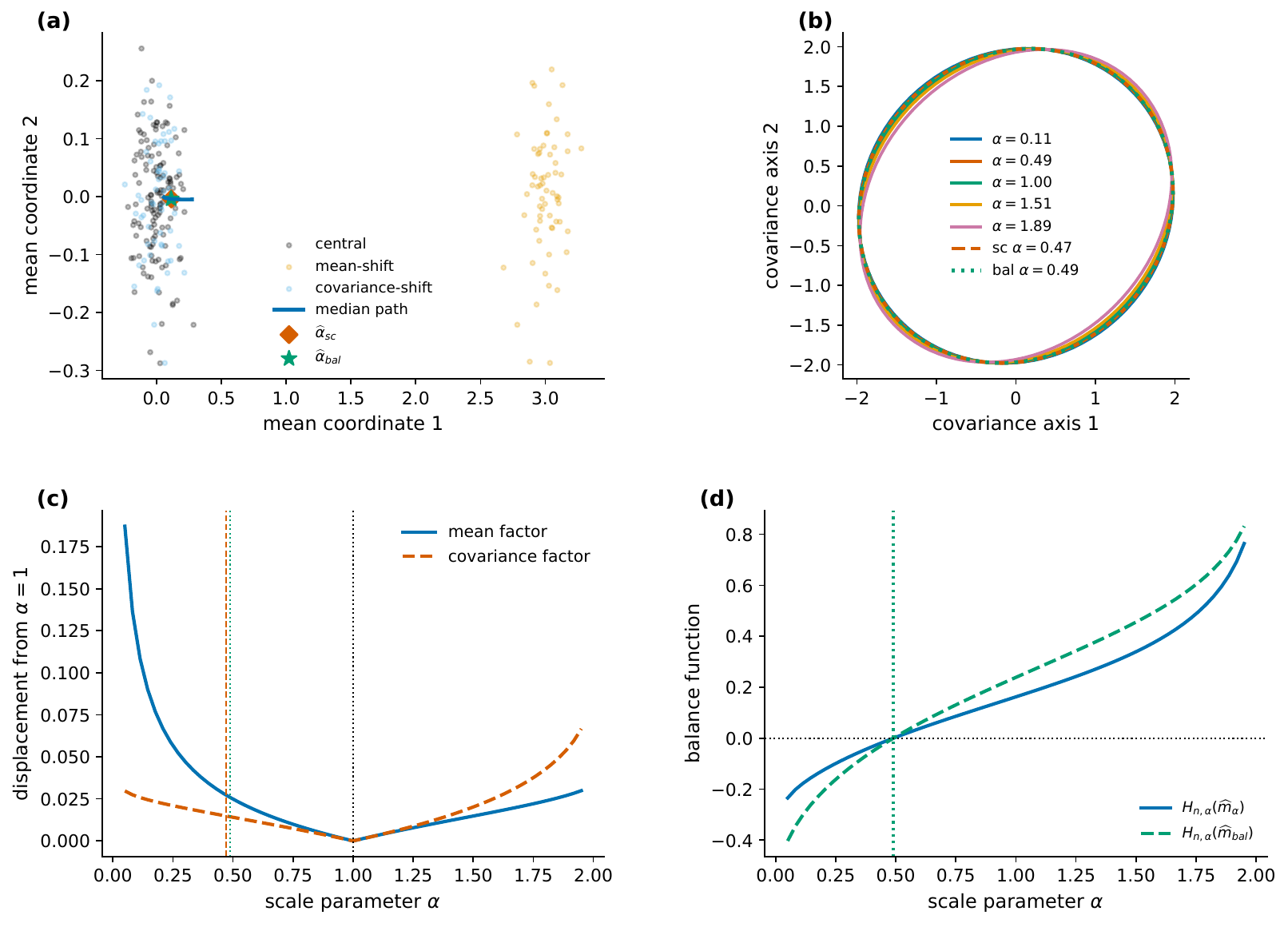}
\caption{
Bures--Wasserstein Gaussian product example.
(a) Mean observations and mean-factor trajectory of the median path.
(b) Covariance ellipses along the median path, with scale-calibrated and
balanced estimates overlaid.
(c) Mean-factor and covariance-factor displacements from the fixed-scale median
at $\alpha=1$.
(d) Balance functions evaluated along the fixed-scale path and at the balanced
median, with the selected balanced scale marked.
}
\label{fig:exp4-bw}
\end{figure}

We also ran a Monte Carlo study with $d=10$ and
$n\in\{100,300,1000\}$ using $300$ replications.  Table~\ref{tab:exp4-bw}
summarizes the selected scale values and the displacement of the adaptive
medians from the fixed $\alpha=1$ median.  Both methods converge in all
replications.  The selected scales are close to one on average in this
higher-dimensional design.  The scale-calibrated estimator has smaller
Monte Carlo variability in $\widehat\alpha$ than the balanced estimator, as
expected from its marginal plug-in construction.  The factorwise displacements
from the fixed $\alpha=1$ median are small and decrease with sample size.

\begin{table}[t]
\centering
\small
\caption{
Experiment 4 Bures--Wasserstein Monte Carlo summary for $d=10$.  Entries are
Monte Carlo means with standard deviations in parentheses.  The column
``conv.'' gives the algorithmic convergence rate.
}
\label{tab:exp4-bw}
\begin{tabular}{rlrrrr}
\toprule
$n$
& method
& $\widehat\alpha$
& mean displacement
& covariance displacement
& conv. \\
\midrule
100
& balanced
& $1.0894$ $(0.3004)$
& $0.0458$ $(0.0551)$
& $0.0324$ $(0.0301)$
& 1.000 \\
100
& scale calibrated
& $1.1278$ $(0.0839)$
& $0.0185$ $(0.0100)$
& $0.0158$ $(0.0125)$
& 1.000 \\
300
& balanced
& $1.0869$ $(0.1918)$
& $0.0273$ $(0.0217)$
& $0.0204$ $(0.0166)$
& 1.000 \\
300
& scale calibrated
& $1.1202$ $(0.0483)$
& $0.0174$ $(0.0063)$
& $0.0134$ $(0.0067)$
& 1.000 \\
1000
& balanced
& $1.1252$ $(0.0968)$
& $0.0198$ $(0.0103)$
& $0.0157$ $(0.0100)$
& 1.000 \\
1000
& scale calibrated
& $1.1281$ $(0.0244)$
& $0.0187$ $(0.0031)$
& $0.0140$ $(0.0034)$
& 1.000 \\
\bottomrule
\end{tabular}
\end{table}

Taken together, the four experiments support the main theoretical messages of
the paper.  Naive joint minimization over $\alpha$ degenerates to the boundary.
The fixed-scale median path is informative and can reveal scale sensitivity.
Robust scale calibration removes arbitrary unit dependence.  Finally, direct
data-adaptive scale estimation is possible when the median equation is
augmented with a separate balance condition.

\section{Conclusion}\label{sec:conclusion}

This paper studied relative metric scaling for geometric medians on product
manifolds.  The main message is that scale is target-defining.  Unlike the
Fr\'echet mean objective, the product-median objective is not separable across
factors, so changing the relative factor weights changes the population median.

The no-go theorem shows that scale cannot be learned by naive joint minimization
of the raw median objective.  For every fixed location, the objective is concave
in the scale parameter, and joint minimization degenerates to the boundary.  In
the limiting open-interval problem, the solution collapses to one of the
marginal median objectives.  Thus a meaningful scale choice requires an
additional identifying principle.

We developed three such principles.  The sensitivity-path approach treats the
family $\{m_\alpha:\alpha\in I_\eps\}$ as the object of inference and provides
uniform consistency, functional asymptotic normality, and a derivative-based
measure of scale sensitivity.  Robust scale calibration supplies a default
unit-invariant product median by standardizing each factor distance with a
marginal radial median scale.  The balanced estimating equation provides a
direct data-adaptive scale estimator by adding a bounded moment condition that
balances the two factor contributions.  These approaches serve complementary
purposes: path analysis quantifies scale dependence, calibration gives a robust
default, and balancing yields an interior learned scale.

The numerical experiments support the theory.  Naive joint minimization selects
endpoints rather than meaningful interior scales.  The empirical median path
reveals how the two factor components respond to changes in $\alpha$.  Robust
scale calibration removes artificial dependence on measurement units, and the
balanced estimating equation produces regular data-adaptive estimates, although
finite-sample normal approximation for the scale component may be slow in
imbalanced mixtures.  The Bures--Wasserstein Gaussian example shows that the
same phenomena occur in a non-Euclidean multivariate setting.

Several directions remain open.  First, the present theory is stated for two
factor manifolds, although the no-go phenomenon and the proposed principles
extend naturally to finite products.  A systematic treatment of many-factor
products would be useful for multimodal and tensor-valued data.  Second,
finite-sample inference for the sensitivity path and the balanced scale
estimator deserves further study, especially under contamination and high
dimension.  Third, stochastic and streaming algorithms for calibrated and
balanced product medians would broaden applicability to large-scale data.
Finally, many applications involve spaces that are not strict products, such as
fiber bundles or quotient geometries.  Extending scale-calibrated robust
location theory to these settings is an important topic for future work.


\appendix
\section{Auxiliary empirical-process details}\label{app:auxiliary}

This appendix records one set of primitive conditions implying Assumption \ref{ass:empirical}.  Suppose $K$ is contained in a finite union of normal-coordinate charts and the cut-locus condition in Assumption \ref{ass:no-singular} holds.  On $I_\eps\times K$, the distance class
\[
 \{z\mapsto d_\alpha(m,z):\alpha\in I_\eps,m\in K\}
\]
is Lipschitz in $(\alpha,m)$ with an integrable envelope of the form
\[
 C\{1+d_1(m_*,z)\}
\]
for any fixed $m_*\in K$.  Since $I_\eps\times K$ is totally bounded, the class is Glivenko--Cantelli.

For the score class, away from the diagonal and cut locus, the map
\[
 (\alpha,m)\mapsto \psi_\alpha(z;m)
\]
is continuously differentiable.  On $I_\eps$,
\[
 \|\psi_\alpha(z;m)\|_{g_1}
 \le \frac1{\sqrt\eps}.
\]
If the derivative with respect to $(\alpha,m)$ is bounded by a square-integrable envelope $L(z)$, then the score class is Lipschitz-parametric:
\[
 \|\psi_\alpha(z;m)-\psi_\beta(z;m')\|
 \le L(z)(|\alpha-\beta|+d_1(m,m')).
\]
Finite-dimensional Lipschitz-parametric classes with square-integrable envelopes are Donsker.  This gives Assumption \ref{ass:empirical}.

\section{Influence functions for radial median scales}\label{app:scale-if}

This appendix gives primitive sufficient conditions for Assumption \ref{ass:alpha-if}.  We present the $M$-factor; the $N$-factor is identical. The argument is a standard Bahadur-type expansion for sample quantiles, combined with the influence-function expansion of the marginal median \citep{bahadur_1966_NoteQuantilesLarge, serfling_1980_ApproximationTheoremsMathematical}. 

Let $p_0$ be the marginal median and $s_M$ the median of $R_M=d_M(p_0,X)$.  Define
\[
 H_M(p,s)=P\{d_M(p,X)\le s\}-\frac12.
\]
Assume $H_M(p_0,s_M)=0$, the distribution of $R_M$ has density $f_M$ with $f_M(s_M)>0$, and $p\mapsto H_M(p,s_M)$ is differentiable at $p_0$ with derivative $D_pH_M$.

Assume the marginal median has expansion
\[
 \sqrt n\log_{p_0}\widehat p_n
 =\frac1{\sqrt n}\sum_{i=1}^n\phi_p(X_i)+o_{\Pbb}(1).
\]
Then the radial median scale satisfies
\begin{equation}\label{eq:scale-if-M}
 \sqrt n(\widehat s_M-s_M)
 =\frac1{\sqrt n}\sum_{i=1}^n\phi_{s_M}(X_i)+o_{\Pbb}(1),
\end{equation}
where
\begin{equation}\label{eq:phisM}
 \phi_{s_M}(X)
 =
 \frac{1/2-\1\{d_M(p_0,X)\le s_M\}}{f_M(s_M)}
 -
 \frac{D_pH_M[\phi_p(X)]}{f_M(s_M)}.
\end{equation}
The sign convention depends on whether $\phi_p$ is defined as the influence function of $\widehat p_n$ or of the estimating equation; \eqref{eq:phisM} uses the convention displayed above.

Similarly,
\[
 \sqrt n(\widehat s_N-s_N)
 =\frac1{\sqrt n}\sum_{i=1}^n\phi_{s_N}(Y_i)+o_{\Pbb}(1).
\]
For
\[
 h(s_M,s_N)=2\frac{s_N^2}{s_M^2+s_N^2},
\]
we have
\[
 \frac{\partial h}{\partial s_M}
 =-\frac{4s_Ms_N^2}{(s_M^2+s_N^2)^2},
 \qquad
 \frac{\partial h}{\partial s_N}
 =\frac{4s_Ns_M^2}{(s_M^2+s_N^2)^2}.
\]
Thus
\begin{equation}\label{eq:alpha-if-formula}
 \phi_\alpha(Z)
 =
 -\frac{4s_Ms_N^2}{(s_M^2+s_N^2)^2}\phi_{s_M}(X)
 +
 \frac{4s_Ns_M^2}{(s_M^2+s_N^2)^2}\phi_{s_N}(Y).
\end{equation}
This yields Assumption \ref{ass:alpha-if}.

\section{Proofs}

\noindent \textbf{Proof of Lemma~\ref{lem:scaled-geometry}.}
If $g$ is replaced by $cg$ with $c>0$, the curvature tensor as a $(1,3)$ tensor is unchanged, while sectional curvature is divided by $c$.  Therefore $\secop_{\alpha g_M}=\alpha^{-1}\secop_{g_M}$ and $\secop_{(2-\alpha)g_N}=(2-\alpha)^{-1}\secop_{g_N}$.  The product curvature is bounded above by the larger of the factorwise upper bounds, since mixed planes are flat and arbitrary two-planes decompose as weighted combinations of factorwise planes.  The injectivity-radius scaling follows because lengths are multiplied by $\sqrt c$ under the scaling $g\mapsto cg$, whereas the geodesic paths and cut times as affine geodesics are unchanged.  The uniform bounds on $I_\eps$ are immediate.

\medskip 
\noindent \textbf{Proof of Lemma~\ref{lem:concavity}.}
The derivative formulas follow by direct differentiation of
\[
 \{\alpha A+(2-\alpha)B\}^{1/2}
\]
with $A=A_m(z)$ and $B=B_m(z)$.  The second derivative is nonpositive.  Sums and expectations of concave functions are concave.  Points at which $d_\alpha(m,z)=0$ are handled by continuity, or equivalently by the fact that the square-root of a nonnegative affine function is concave.

\medskip
\noindent \textbf{Proof of Theorem~\ref{thm:boundary-degeneracy}.}
For fixed $m$, Lemma \ref{lem:concavity} implies that the infimum of $\alpha\mapsto F_{n,\alpha}(m)$ over $[a,b]$ is attained at an endpoint:
\[
 \inf_{\alpha\in[a,b]}F_{n,\alpha}(m)=\min\{F_{n,a}(m),F_{n,b}(m)\}.
\]
Therefore
\[
 \inf_{m,\alpha\in[a,b]}F_{n,\alpha}(m)
 =\inf_m\min\{F_{n,a}(m),F_{n,b}(m)\}.
\]
For arbitrary functions $f$ and $g$,
\[
 \inf_m\min\{f(m),g(m)\}
 =\min\{\inf_m f(m),\inf_m g(m)\}.
\]
Applying this identity with $f=F_{n,a}$ and $g=F_{n,b}$ gives \eqref{eq:emp-boundary-degeneracy}.  The population proof is identical.

\medskip 
\noindent \textbf{Proof of Corollary~\ref{cor:endpoint-collapse}.}
For fixed $(p,q)$,
\[
 \lim_{\alpha\downarrow0}F_\alpha(p,q)=\sqrt2\,E[d_N(q,Y)],
\]
and
\[
 \lim_{\alpha\uparrow2}F_\alpha(p,q)=\sqrt2\,E[d_M(p,X)],
\]
by monotone or dominated convergence after using the elementary bound
\[
 d_\alpha((p,q),(X,Y))\le \sqrt2\{d_M(p,X)+d_N(q,Y)\}.
\]
By Theorem \ref{thm:boundary-degeneracy} applied to intervals $[a,b]\subset(0,2)$ and then taking $a\downarrow0$, $b\uparrow2$, the joint infimum over $(0,2)$ is the smaller of the two limiting endpoint infima.  This gives \eqref{eq:endpoint-collapse}.

\medskip 
\noindent \textbf{Proof of Corollary~\ref{cor:kfactor}.}
The map $w\mapsto \sum_jw_jd_j(m_j,z_j)^2$ is affine and nonnegative, and $t\mapsto\sqrt t$ is concave and increasing.  Their composition is concave.  Sums and expectations preserve concavity.  A concave function on a compact convex polytope attains its minimum at an extreme point, possibly nonuniquely.

\medskip 
\noindent \textbf{Proof of Proposition~\ref{prop:path-continuity}.}
The map $(\alpha,m)\mapsto F_\alpha(m)$ is continuous on the compact set $I_\eps\times K$.  Let $\alpha_k\to\alpha$.  By compactness, every subsequence of $m_{\alpha_k}$ has a further subsequence converging to some $m\in K$.  Along such a subsequence,
\[
 F_\alpha(m)=\lim_kF_{\alpha_k}(m_{\alpha_k})
 \le \lim_kF_{\alpha_k}(u)=F_\alpha(u)
\]
for all $u\in K$.  Hence $m$ is a minimizer of $F_\alpha$ on $K$.  By uniqueness, $m=m_\alpha$.  Thus every convergent subsequence has limit $m_\alpha$, and $m_{\alpha_k}\to m_\alpha$.

\medskip 
\noindent \textbf{Proof of Theorem~\ref{thm:uniform-consistency}.}
By Assumption \ref{ass:empirical},
\[
 \sup_{\alpha\in I_\eps,m\in K}|F_{n,\alpha}(m)-F_\alpha(m)|\to0.
\]
Fix $\eta>0$.  By uniform separation \eqref{eq:uniform-separation}, there is $c_\eta>0$ such that
\[
 F_\alpha(m)-F_\alpha(m_\alpha)\ge c_\eta
\]
whenever $d_1(m,m_\alpha)\ge\eta$.  With probability tending to one,
\[
 \sup_{\alpha,m}|F_{n,\alpha}(m)-F_\alpha(m)|<c_\eta/3
\]
and the approximate argmin error is smaller than $c_\eta/3$.  On this event, if $d_1(\widehat m_{n,\alpha},m_\alpha)\ge\eta$ for some $\alpha$, then
\[
 F_{n,\alpha}(\widehat m_{n,\alpha})
 \ge F_\alpha(\widehat m_{n,\alpha})-c_\eta/3
 \ge F_\alpha(m_\alpha)+2c_\eta/3
 \ge F_{n,\alpha}(m_\alpha)+c_\eta/3,
\]
contradicting the approximate minimizer property.  Hence the supremum distance is less than $\eta$ with probability tending to one.

\medskip
\noindent \textbf{Proof of Theorem~\ref{thm:functional-clt}.}
Let
$
 v_{n,\alpha}=\log_{m_\alpha}(\widehat m_{n,\alpha})\in T_{m_\alpha}\M.
$
Uniform consistency implies $\sup_\alpha\|v_{n,\alpha}\|\to0$ in probability.  The estimating equation gives
\[
 0=G_{n,\alpha}(\widehat m_{n,\alpha})
 =G_{n,\alpha}(m_\alpha)+A_\alpha v_{n,\alpha}+R_{n,\alpha},
\]
where the uniform differentiability and stochastic equicontinuity assumptions give
\[
 \sup_{\alpha\in I_\eps}\sqrt n\|R_{n,\alpha}\|=o_{\Pbb}(1).
\]
Moreover,
\[
 \sqrt n\,G_{n,\alpha}(m_\alpha)
 =\frac1{\sqrt n}\sum_{i=1}^n\psi_\alpha(Z_i;m_\alpha).
\]
Parallel transporting to $T_{m_{\alpha_0}}\M$ yields
\[
 0=
 \mathbb Z_n(\alpha)
 +\widetilde A_\alpha\sqrt n\,\Pi_\alpha v_{n,\alpha}
 +o_{\Pbb}(1)
\]
uniformly in $\alpha$.  Since $\sup_\alpha\|\widetilde A_\alpha^{-1}\|<\infty$, \eqref{eq:uniform-linear-expansion} follows.  Weak convergence \eqref{eq:functional-clt} follows from the functional CLT for $\mathbb Z_n$ and the continuous mapping theorem, because $\alpha\mapsto \widetilde A_\alpha^{-1}$ is continuous on compact $I_\eps$.

\medskip
\noindent \textbf{Proof of Theorem~\ref{thm:path-derivative}.}
The population first-order condition is
\[
 G_\alpha(m_\alpha)=0.
\]
By Assumption \ref{ass:differentiability}, the map $(\alpha,m)\mapsto G_\alpha(m)$ is continuously differentiable, and $D_mG_\alpha(m_\alpha)=A_\alpha$ is invertible.  The implicit function theorem on manifolds, applied in normal coordinates around each $m_\alpha$, implies that $\alpha\mapsto m_\alpha$ is continuously differentiable and
\[
 A_\alpha\dot m_\alpha+B_\alpha=0.
\]
This gives \eqref{eq:path-derivative}.

To compute $B_\alpha$, write $w_m(Z)=\log_mZ$ and $r_\alpha=d_\alpha(m,Z)$.  For fixed $m$, $w_m(Z)$ does not depend on $\alpha$, while
\[
 \partial_\alpha r_\alpha(m,Z)
 =
 \frac{d_M(p,X)^2-d_N(q,Y)^2}{2r_\alpha(m,Z)}.
\]
Thus
\[
 \partial_\alpha\left\{\frac{w_m(Z)}{r_\alpha(m,Z)}\right\}
 =
 -\frac{d_M(p,X)^2-d_N(q,Y)^2}{2r_\alpha(m,Z)^3}w_m(Z).
\]
Dominated differentiation is justified by Assumption \ref{ass:differentiability}, yielding \eqref{eq:Balpha}.

\medskip
\noindent \textbf{Proof of Theorem~\ref{thm:scale-equivariance}.}
The marginal medians are unchanged by multiplying a factor distance by a positive constant.  The empirical marginal scales transform as
\[
 \widehat s_M'=c_M\widehat s_M,
 \qquad
 \widehat s_N'=c_N\widehat s_N.
\]
Therefore
\[
 \frac{d_M'(p,X)^2}{(\widehat s_M')^2}
 +
 \frac{d_N'(q,Y)^2}{(\widehat s_N')^2}
 =
 \frac{d_M(p,X)^2}{\widehat s_M^2}
 +
 \frac{d_N(q,Y)^2}{\widehat s_N^2}.
\]
The standardized empirical objective is exactly unchanged, hence so is its minimizer.

\medskip 
\noindent \textbf{Proof of Theorem~\ref{thm:calibration-consistency}.}
The convergence of $\widehat\alpha_{\rm sc}$ follows from the continuous mapping theorem applied to \eqref{eq:alpha-sc}.  For the median,
\[
 d_1(\widehat m_{n,\widehat\alpha_{\rm sc}},m_{\alpha_{\rm sc}})
 \le
 d_1(\widehat m_{n,\widehat\alpha_{\rm sc}},m_{\widehat\alpha_{\rm sc}})
 +
 d_1(m_{\widehat\alpha_{\rm sc}},m_{\alpha_{\rm sc}}).
\]
The first term is $o_{\Pbb}(1)$ by uniform consistency; the second is $o_{\Pbb}(1)$ by continuity of the path.

\medskip
\noindent \textbf{Proof of Theorem~\ref{thm:calibrated-clt}.}
The empirical estimating equation is
\[
 G_{n,\widehat\alpha_{\rm sc}}(\widehat m_{n,\widehat\alpha_{\rm sc}})=0.
\]
Let $v_n=\log_{m_0}(\widehat m_{n,\widehat\alpha_{\rm sc}})$.  A first-order expansion in $(m,\alpha)$ around $(m_0,\alpha_{\rm sc})$ gives
\[
 0=
 G_{n,\alpha_{\rm sc}}(m_0)
 +A_0v_n
 +B_0(\widehat\alpha_{\rm sc}-\alpha_{\rm sc})
 +r_n,
\]
with $\sqrt n\|r_n\|=o_{\Pbb}(1)$ by the uniform differentiability and empirical-process assumptions.  Since
\[
 \sqrt nG_{n,\alpha_{\rm sc}}(m_0)
 =\frac1{\sqrt n}\sum_{i=1}^n\psi_0(Z_i),
\]
and \eqref{eq:alpha-if} holds, solving for $\sqrt n v_n$ yields \eqref{eq:calibrated-linearization}.  The central limit theorem gives \eqref{eq:calibrated-clt}.

\medskip 
\noindent \textbf{Proof of Lemma~\ref{lem:h-monotone}.}
Differentiate the quotient.  The numerator derivative is $A+B$, and the denominator derivative is $A-B$.  Hence
\[
 \partial_\alpha h_\alpha
 =
 \frac{(A+B)(\alpha A+(2-\alpha)B)-(\alpha A-(2-\alpha)B)(A-B)}{\{\alpha A+(2-\alpha)B\}^2}
 =
 \frac{4AB}{\{\alpha A+(2-\alpha)B\}^2}.
\]

\medskip  
\noindent \textbf{Proof of Theorem~\ref{thm:unique-balance-fixed-m}.}
Continuity follows from dominated convergence since $|h_\alpha|\le1$ and $h_\alpha$ is continuous in $\alpha$ almost surely.  Strict monotonicity follows from Lemma \ref{lem:h-monotone} and the assumption $P(AB>0)>0$.  Because $B>0$ almost surely,
\[
 \lim_{\alpha\downarrow0}h_\alpha(Z;m)=-1
\]
almost surely.  Because $A>0$ almost surely,
\[
 \lim_{\alpha\uparrow2}h_\alpha(Z;m)=1
\]
almost surely.  Dominated convergence gives the endpoint limits for $H_\alpha(m)$.  The intermediate value theorem and strict monotonicity give existence and uniqueness.

\medskip 
\noindent \textbf{Proof of Theorem~\ref{thm:balanced-clt}.}
Work in a normal coordinate chart at $m_0$ and write
\[
 \delta_n=
 \begin{pmatrix}
 \log_{m_0}\widehat m\\
 \widehat\alpha-\alpha_0
 \end{pmatrix}.
\]
A Taylor expansion of the population estimating map and stochastic equicontinuity of the empirical process give
\[
 0=P_n\Xi_{\widehat\alpha}(\cdot;\widehat m)+o_{\Pbb}(n^{-1/2})
 =P_n\Xi_{\alpha_0}(\cdot;m_0)+J\delta_n+r_n,
\]
where $\sqrt n\|r_n\|=o_{\Pbb}(1)$.  Therefore
\[
 \sqrt n\delta_n
 =-J^{-1}\frac1{\sqrt n}\sum_{i=1}^n\Xi_{\alpha_0}(Z_i;m_0)+o_{\Pbb}(1).
\]
The multivariate CLT yields \eqref{eq:balanced-normal}.



\bibliographystyle{dcu}
\bibliography{references}


\end{document}